\newcommand\reallywidehat[1]{%
\savestack{\tmpbox}{\stretchto{%
  \scaleto{%
    \scalerel*[\widthof{\ensuremath{#1}}]{\kern-.6pt\bigwedge\kern-.6pt}%
    {\rule[-\textheight/2]{1ex}{\textheight}}
  }{\textheight}%
}{0.5ex}}%
\stackon[1pt]{#1}{\tmpbox}%
}
\tikzset{
  symbol/.style={
    draw=none,
    every to/.append style={
      edge node={node [sloped, allow upside down, auto=false]{$#1$}}}
  }
} 
\newcommand{\Z}{\mathbb{Z}}
\newcommand{\R}{\mathbb{R}}
\newcommand{\BC}{\mathbb{C}}
\newcommand{\SL}{\mathrm{SL}}
\newcommand{\GL}{\mathrm{GL}}
\newcommand{\SO}{\mathrm{SO}}
\newcommand{\Sp}{\mathrm{Sp}}
\newcommand{\RG}{\mathrm{G}}
\newcommand{\Mseg}{\underline{\mathrm{Mseg}}}
\newcommand{\Seg}{\underline{\mathrm{Seg}}}
\newcommand{\half}[1]{\frac{#1}{2}}
\newcommand{\comment}[1]{}
\newtheorem{thm}{Theorem}[section]
\newtheorem{cor}[thm]{Corollary}
\newtheorem{lemma}[thm]{Lemma}
\newtheorem{prop}[thm]{Proposition}
\newtheorem {conj}[thm]{Conjecture}
\newtheorem {ques/conj}[thm]{Question/Conjecture}
\newtheorem{defn}[thm]{Definition}
\newtheorem{exmp}[thm]{Example}
\newtheorem{algo}[thm]{Algorithm}
\newtheorem*{globalcond*}{Global Condition}
\newtheorem*{localcond*}{Local Condition}
\newtheorem*{globalconj*}{Global Conjecture}
\newtheorem*{localconj*}{Local Conjecture}
\newtheorem*{nonzero*}{Conjecture on the non-vanishing of the normalized intertwining operators}
\newtheorem*{holo*}{Conjecture on the holomorphicity of the normalized intertwining operators}
\DeclareMathOperator{\supp}{supp}
\DeclareMathOperator{\Sym}{Sym}
\numberwithin{equation}{section}
\let\oldbullet\bullet
\renewcommand{\bullet}{{\vcenter{\hbox{\tiny$\oldbullet$}}}}
\begin{document}
\title[A Combinatorial Lemma]{Vogan's Conjecture on local Arthur packets of $p$-adic $\GL_n$ and a combinatorial Lemma}

\author[C.-H. Lo]{Chi-Heng Lo}
\address{Department of Mathematics\\
Purdue University\\
West Lafayette, IN, 47907, USA}
\email{lo93@purdue.edu}

\subjclass[2000]{Primary 11F70, 22E50; Secondary 11F85}


\keywords{Local Arthur Packets, Local Arthur parmeters, Vogan's conjecture}

\thanks{The research of the author is partially supported by the NSF Grant DMS-1848058.}

\begin{abstract}
    For $\GL_n$ over a $p$-adic  field, Cunningham and Ray proved Vogan's conjecture, that is, local Arthur packets are the same as ABV packets. They used the endoscopic theory to reduce the general case to a combinatorial lemma for irreducible local Arthur parameters, and their proof implies that one can also prove Vogan's conjecture for $p$-adic $\GL_n$ by proving a generalized version of this combinatorial lemma. Riddlesden recently proved this generalized lemma. In this paper, we give a new proof of it, which has its own interest.  
\end{abstract}

\maketitle

\section{Introduction}

Let $F$ be a non-Archimedean field of characteristic 0 and denote $W_F$ the Weil group of $F$. Let $\RG$ be a connected reductive group defined over $F$. We denote $G:=\RG(F)$ and $\Pi(G)$ the isomorphism classes of smooth irreducible representations of $G$. A local Arthur parameter $\psi$ is a continuous homomorphism
\[ \psi : W_F \times \SL_2^D(\BC) \times \SL_2^A(\BC) \to {}^{L}G,\]
such that
\begin{enumerate}
    \item [(1)] the restriction of $\psi$ to $W_F$ has bounded image;
    \item [(2)] the restrictions of $\psi$ to both $\SL_2(\BC)$ are analytic;
    \item [(3)] $\psi$ commutes with the projections $W_F \times \SL_2^D(\BC) \times \SL_2^A(\BC) \to W_F$ and ${}^{L}G \to W_F$.
\end{enumerate}
Here the first $\SL_{2}^{D}(\BC)$ is called the Deligne-$\SL_2$ and the second $\SL_{2}^{A}(\BC)$ is called the Arthur-$\SL_2$.

In the fundamental work \cite{Art13}, for a local Arthur parameter $\psi$ of quasi-split classical groups, Arthur attached a local Arthur packet $\Pi_\psi$. This is a finite multi-set of smooth irreducible representations, satisfying certain twisted endoscopic character identities (\cite[\S2]{Art13}). 
Assuming the Ramanujan Conjecture, Arthur showed that these local Arthur packets characterize the local components of discrete square-integrable automorphic representations.

An interesting question is to construct each local Arthur packet $\Pi_{\psi}$ over $p$-adic fields besides the abstract definition in \cite{Art13}. In a series of work (\cite{Moe06a, Moe06b, Moe09a, Moe10, Moe11a}), M{\oe}glin explicitly constructed each local Arthur packet $\Pi_{\psi}$ and showed that it is multiplicity free. However, there are difficulties in her construction when trying to compute the representations in the local Arthur packets using the Langlands classification. To remedy this, for symplectic or split odd special orthogonal groups, Atobe gave a reformulation of M{\oe}glin's construction (\cite{Ato20b}) based on the derivatives introduced in \cite{AM20} and gave an algorithm to explicitly compute the Langlands classification for the representations in a local Arthur packet. The main tools in these results are (partial) Aubert-Zelevinsky involution and partial Jacquet module, which are representation theoretic.

On the other hand, in \cite{CFMMX22}, Cunningham \textit{et al.} aim to construct local Arthur packets over $p$-adic fields using geometric approach. They extend the work of \cite{ABV92} to $p$-adic reductive groups and defined a packet $\Pi_\phi^{\mathrm{ABV}}$ using micro-local vanishing cycle functors, for any $L$-parameter $\phi$ of any $p$-adic reductive group $G$. 
Each ABV-packet $\Pi_{\phi}^{\textrm{ABV}}$ consists of not only the representations of $G$, but also the representations of the inner forms of $G$. We shall denote $\Pi_{\phi}^{\textrm{ABV}}(G):= \Pi_{\phi}^{\textrm{ABV}} \cap \Pi(G)$. 
It is expected that the ABV-packets recover the local Arthur packets in the following sense. For each local Arthur parameter $\psi$ of $G$, we associate an $L$-parameter $\phi_{\psi}$ of $G$ by
\[ \phi_{\psi}(w,x):= \psi\left(w,x, \begin{pmatrix}
    |w|^{\half{1}} & \\ & |w|^{\half{-1}}
\end{pmatrix} \right).\]
The \emph{Vogan's Conjecture} is stated as follows. 

\begin{conj}[{\cite[Conjecture 8.3.1(a)]{CFMMX22}}] \label{conj Vogan}
Let $\psi$ be a local Arthur parameter of $\RG(F)$ and denote $\phi_{\psi}$ the associated $L$-parameter. The following equality holds.
\[ \Pi_{\psi}= \Pi_{\phi_{\psi}}^{\textrm{ABV}} (\RG(F))\]
\end{conj}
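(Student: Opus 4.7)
Following the strategy signalled in the abstract, the plan is to reduce Conjecture~\ref{conj Vogan} to a combinatorial lemma about irreducible local Arthur parameters via the Cunningham--Ray argument, and then give a new combinatorial proof of that lemma.

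First I would recall the structure of Arthur packets for $\GL_n$: any local Arthur parameter decomposes as $\psi = \bigoplus_i \psi_i$ with each $\psi_i$ irreducible of the form $\rho_i \boxtimes S_{a_i}^D \boxtimes S_{b_i}^A$, and $\Pi_\psi$ is the singleton consisting of the Langlands quotient of the parabolic induction of the corresponding Speh representations. The Cunningham--Ray reduction, which exploits compatibility of both sides of Conjecture~\ref{conj Vogan} with parabolic induction and twisted endoscopy, isolates the problem for irreducible $\psi$, and the microlocal description of ABV-packets from \cite{CFMMX22} translates it further into a purely numerical identity on the Vogan variety attached to $\phi_\psi$. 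This numerical identity is the (generalized) combinatorial lemma.

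Next I would pass to the combinatorial side via the Zelevinsky classification. Orbits on the Vogan variety for an $L$-parameter $\phi$ of $\GL_n$ with supercuspidal support in a single cuspidal line are in bijection with multi-segments of fixed total degree, and $\phi_\psi$ corresponds to an explicit rectangle multi-segment $\mathfrak{m}_\psi$ read off from the Jordan block $(a,b)$. By a theorem of M{\oe}glin--Waldspurger, the representation attached to $\psi$ corresponds to the multi-segment obtained by applying the Zelevinsky involution to $\mathfrak{m}_\psi$. The combinatorial lemma then becomes an explicit identity among the Kazhdan--Lusztig--Zelevinsky numbers governing both the composition series of standard modules and the micro-local multiplicities defining the ABV-packet.

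My proof strategy for the lemma itself would be induction on the Arthur parameter, specifically on $\min(a,b)$, using a partial Aubert--Zelevinsky involution or partial Jacquet-module operation in the spirit of \cite{AM20} to peel off one outer layer from the rectangle. The crucial point, and the main anticipated obstacle, is to show that this peeling operation is compatible with the micro-local construction of the ABV-packet, so that the inductive hypothesis applies to the smaller multi-segment obtained; this requires a careful combinatorial analysis of how the Zelevinsky involution interacts with the stratification of the Vogan variety, together with a vanishing argument ruling out spurious contributions. Once this compatibility is established, the base case $\min(a,b) = 1$ reduces to a single segment, where the Vogan variety is a point and the identity is immediate, and reassembling the irreducible pieces yields Conjecture~\ref{conj Vogan}.
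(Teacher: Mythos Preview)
Your route diverges from the paper's at the very first step. You reduce to irreducible $\psi$ via the endoscopic argument of \cite{CR23} and then propose a new proof of the irreducible combinatorial lemma (Lemma~\ref{lem main irred intro}) by inducting on $\min(a,b)$ and peeling layers from a single rectangle. The paper instead avoids endoscopy altogether and proves the \emph{generalized} lemma (Lemma~\ref{lem main intro}) for arbitrary $\psi$ directly; this is in fact its advertised contribution. Its induction is on the number $|I_\rho|$ of irreducible summands: at each step one identifies an extremal summand (maximal $a+d$, then minimal $d$ among those), shows via a careful analysis of the M{\oe}glin--Waldspurger algorithm that $\beta$ must contain the entire corresponding rectangle $\delta_{d,a}$, and that deleting this rectangle from both sides preserves the two inequalities $\beta\geq\alpha$ and $\widetilde\beta\geq\widetilde\alpha$ (Proposition~\ref{prop main} and Lemma~\ref{lemma M-W}). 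The formulation is purely order-theoretic on multi-segments---if $\alpha$ is of Arthur type and $\beta\geq\alpha$, $\widetilde\beta\geq\widetilde\alpha$, then $\beta=\alpha$---not an identity of Kazhdan--Lusztig--Zelevinsky numbers, and the tools are elementary index bookkeeping in the M{\oe}glin--Waldspurger algorithm rather than partial Jacquet modules or \cite{AM20}-style derivatives. Your approach, even if completed, would re-prove the irreducible case already handled in \cite{CR22} and then invoke \cite{CR23}; the paper's point is precisely that one can bypass the endoscopic step by treating the general $\psi$ combinatorially from the start.
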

There are more precise statements matching the distributions on the Arthur side and the ABV side. We refer to \cite[Conjecture 8.3.1]{CFMMX22} for more details.

 Conjecture \ref{conj Vogan} remains widely open. The only known case is $\GL_n(F)$, proved by Cunningham and Ray in \cite{CR22, CR23}.
 We roughly describe their main idea as follows. A crucial ingredient of their proof is the combinatorial description of an involution on $L$-parameters. We shall call this map the \emph{Pyasetskii involution} and denote it by
\[ \phi \mapsto \widehat{\phi}.\]
This involution is defined via the geometric structure on the Vogan variety. We refer to \cite[\S 6.4]{CFMMX22} or \cite[\S 4.3]{Zel81} for the precise definition.

When $G=\GL_n(F)$, there are bijections among irreducible representations of $G$, $L$-parameters of $G$, and a collection of multi-segments (see \S \ref{sec multi-segments} for details). In \cite{MW86},  M{\oe}glin and Waldspurger showed that under these bijections, Pyasetskii involution on $L$-parameters matches the \emph{Zelevinsky involution} on irreducible representations defined in \cite[\S 4.1]{Zel81}. Moreover, they gave a combinatorial algorithm on multi-segments to realize these involutions, for more details see \S \ref{sec M-W}. Later in \cite{KZ96}, Knight and Zelevinsky gave a closed formula for the involution on multi-segments, which is proved by using the theory of flows in network .



For $\GL_n(F)$, the structure of $L$-packets and local Arthur packets are simple: they are all singletons. Therefore, we have $\Pi_{\psi} \subseteq \Pi_{\phi_{\psi}}^{\textrm{ABV}}$ for free. With this observation and the geometric structure of ABV-packets of $\GL_n(F)$ (\cite[Proposition 3.2.1]{CFK22}), Cunningham and Ray demonstrated in the proof of \cite[Theorem 5.3]{CR22} that for an irreducible local Arthur parameter $\psi$ of $\GL_n(F)$, the equality 
\[\Pi_{\psi}= \Pi_{\phi_{\psi}}^{\textrm{ABV}}(\GL_n(F))\]
holds if the following lemma holds for $\psi$.
\begin{lemma}[{\cite[Lemma 4.8]{CR22}}]\label{lem main irred intro}
    Let $\psi$ be an irreducible local Arthur parameter of $\GL_n(F)$ and denote $\phi_{\psi}$ the associated $L$-parameter. If $\phi$ is an $L$-parameter of $\GL_n(F)$ satisfying that $\phi \geq \phi_{\psi}$ and $\widehat{\phi} \geq \widehat{\phi_{\psi}}$, then $\phi=\phi_{\psi}$.    
\end{lemma}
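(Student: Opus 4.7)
The plan is to translate Lemma~\ref{lem main irred intro} into the combinatorics of multi-segments via \S\ref{sec multi-segments}, and prove the resulting combinatorial statement by combining the Moeglin--Waldspurger algorithm (\S\ref{sec M-W}) with the rectangular structure of the Arthur multi-segment.

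Setup. Under the Zelevinsky classification, $L$-parameters of $\GL_n(F)$ with fixed supercuspidal support correspond to multi-segments $\mathfrak{m}$; the Pyasetskii involution $\phi \mapsto \widehat{\phi}$ matches the Zelevinsky involution $\mathfrak{m} \mapsto \widehat{\mathfrak{m}}$ (\cite{MW86}); and the closure order $\phi_1 \le \phi_2$ is characterized by $\mathfrak{m}_1 \le \mathfrak{m}_2 \iff r_{i,j}(\mathfrak{m}_1) \le r_{i,j}(\mathfrak{m}_2)$ for all $i \le j$, where $r_{i,j}(\mathfrak{m}) := \#\{\Delta \in \mathfrak{m} : \Delta \supseteq [i,j]\}$. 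This order is generated by the elementary moves $\{\Delta_1,\Delta_2\} \mapsto \{\Delta_1\cup\Delta_2,\Delta_1\cap\Delta_2\}$ on linked pairs. Under this dictionary, for an irreducible local Arthur parameter $\psi = \rho \boxtimes S_a \boxtimes S_b$, the multi-segment $\mathfrak{m}_\psi$ is a \emph{rectangle}: the multi-set of $b$ parallel segments of length $a$, each shifted by $1$ from the next. Its Zelevinsky dual $\widehat{\mathfrak{m}_\psi}$ is the transposed rectangle of $a$ segments of length $b$.

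The key claim to prove is that $\mathfrak{m} \ge \mathfrak{m}_\psi$ implies $\widehat{\mathfrak{m}} \le \widehat{\mathfrak{m}_\psi}$, with equality if and only if $\mathfrak{m} = \mathfrak{m}_\psi$. Granting the claim, the lemma follows immediately: combining $\widehat{\mathfrak{m}} \le \widehat{\mathfrak{m}_\psi}$ with the hypothesis $\widehat{\mathfrak{m}} \ge \widehat{\mathfrak{m}_\psi}$ forces $\widehat{\mathfrak{m}} = \widehat{\mathfrak{m}_\psi}$, whence $\mathfrak{m} = \mathfrak{m}_\psi$ since $\mathfrak{m} \mapsto \widehat{\mathfrak{m}}$ is an involution. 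The key claim is specifically an Arthur-type rigidity phenomenon --- the Zelevinsky involution is not order-reversing globally (as a small check on any two-move pair in a single cuspidal line shows), so the argument must leverage the rectangular shape of $\mathfrak{m}_\psi$.

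To prove the key claim, the plan is induction on the number of elementary up-moves needed to reach $\mathfrak{m}$ from $\mathfrak{m}_\psi$. For a single merge of two linked rows of the rectangle, one can compute $\widehat{\mathfrak{m}}$ directly via the Moeglin--Waldspurger algorithm and verify that the result is obtained from $\widehat{\mathfrak{m}_\psi}$ by the inverse elementary move on a corresponding linked pair of columns of the transposed rectangle, which is a genuine down-move. The hard part is the inductive step: the Moeglin--Waldspurger algorithm is nonlocal, so a second up-move on $\mathfrak{m}$ need not translate into a second down-move on $\widehat{\mathfrak{m}}$. To handle this, I would work globally with the rank function, exploiting the rigid constraints that the rectangular shape of $\mathfrak{m}_\psi$ places on both $r_{i,j}(\mathfrak{m}_\psi)$ and $r_{i,j}(\widehat{\mathfrak{m}_\psi})$, together with a Knight--Zelevinsky-type formula expressing $r_{i,j}(\widehat{\mathfrak{m}})$ as a max-flow-type invariant extracted from $\mathfrak{m}$; the two hypotheses $r_{i,j}(\mathfrak{m}) \ge r_{i,j}(\mathfrak{m}_\psi)$ and $r_{i,j}(\widehat{\mathfrak{m}}) \ge r_{i,j}(\widehat{\mathfrak{m}_\psi})$ should then be forced to collapse into equalities, yielding $\mathfrak{m} = \mathfrak{m}_\psi$ since the rank function is a complete invariant of the multi-segment.
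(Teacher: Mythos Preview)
Your translation to multi-segments and recognition of the rectangular structure of $\mathfrak{m}_\psi$ are correct, and your overall strategy of exploiting this shape is sound. However, the proposal has a genuine gap: the ``key claim'' that $\mathfrak{m} \ge \mathfrak{m}_\psi$ implies $\widehat{\mathfrak{m}} \le \widehat{\mathfrak{m}_\psi}$ is never actually proved. You handle the base case of a single elementary move, but for the inductive step you write only that you ``would work globally with the rank function'' and that the constraints ``should then be forced to collapse into equalities.'' That is a description of a hope, not an argument. Worse, in that last sentence you invoke \emph{both} hypotheses $r_{i,j}(\mathfrak{m}) \ge r_{i,j}(\mathfrak{m}_\psi)$ and $r_{i,j}(\widehat{\mathfrak{m}}) \ge r_{i,j}(\widehat{\mathfrak{m}_\psi})$, which is no longer a proof of the key claim (whose only hypothesis is $\mathfrak{m} \ge \mathfrak{m}_\psi$) but a direct attack on the lemma itself---so the induction has silently collapsed. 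The Knight--Zelevinsky flow formula could in principle be made to work here, but you would need to actually write down the relevant max-flow computation for a multi-segment lying above a rectangle and show it is bounded by the value for the rectangle; this is nontrivial and is essentially what Riddlesden does in \cite{Rid23}.

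The paper's argument (the $|I_\rho|=1$ case of Proposition~\ref{prop main}) avoids the order-reversal claim entirely and proceeds by a direct, elementary pigeonhole. From $\widetilde{\beta} \ge \widetilde{\alpha}$ one knows $\widetilde{\beta}$ contains a segment of length at least $a$ ending at $\tfrac{a+d}{2}$; hence at some stage $r$ of the M{\oe}glin--Waldspurger process on $\beta$ one has $M(\beta^r) \supseteq [\tfrac{-a+d}{2},\tfrac{a+d}{2}]$. The strict monotonicity $b(\Delta_{k^r_{m^r}}) < \cdots < b(\Delta_{k^r_e})$ built into the algorithm, combined with the support bounds $b(\Delta) \ge \tfrac{-a-d}{2}$ and $b(\Delta_{k^r_e}) \le \tfrac{a-d}{2}$ coming from Lemma~\ref{lem a-d}, squeezes every $\Delta_{k^r_l}$ to be exactly $[l-d,l]$. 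Thus $\beta$ already contains the entire rectangle $\delta_{d,a}$, and since $\operatorname{supp}(\beta)=\operatorname{supp}(\alpha)$ this forces $\beta=\alpha$. This is shorter and uses only the M{\oe}glin--Waldspurger algorithm, not the flow description.
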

Here the inequality is under the closure ordering in the associated Vogan variety, which is equivalent to the partial ordering on multi-segments considered in \cite{Zel81} (see \S \ref{sec partial order} for details). They proved above lemma, and hence established the Vogan's Conjecture in this case. Later in \cite{CR23}, they used endoscopic lifting to reduce the general case to the case of irreducible parameters. This proved Vogan's Conjecture for $\GL_n(F)$ completely.

On the other hand, the proof in \cite[Theorem 5.3]{CR22} implies that for an \emph{arbitrary} local Arthur parameter $\psi$ of $\GL_n(F)$, not necessarily irreducible, the equality 
\[\Pi_{\psi}= \Pi_{\phi_{\psi}}^{\textrm{ABV}}(\GL_n(F))\]
holds if the following generalized lemma holds for $\psi$.
\begin{lemma}\label{lem main intro}
    Let $\psi$ be an arbitrary local Arthur parameter of $\GL_n(F)$ and denote $\phi_{\psi}$ the associated $L$-parameter. If $\phi$ is an $L$-parameter of $\GL_n(F)$ satisfying that $\phi \geq \phi_{\psi}$ and $\widehat{\phi} \geq \widehat{\phi_{\psi}}$, then $\phi=\phi_{\psi}$.    
\end{lemma}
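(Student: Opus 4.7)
The plan is to translate Lemma~\ref{lem main intro} into a rigidity statement about multi-segments and then bootstrap from the irreducible case (Lemma~\ref{lem main irred intro}). By the M{\oe}glin-Waldspurger identification \cite{MW86}, the Pyasetskii involution $\phi \mapsto \widehat{\phi}$ becomes the Zelevinsky involution $\mathfrak{m} \mapsto \widehat{\mathfrak{m}}$ on multi-segments, and the closure order becomes Zelevinsky's combinatorial partial order. The goal thus becomes: if $\mathfrak{m} \geq \mathfrak{m}_{\phi_\psi}$ and $\widehat{\mathfrak{m}} \geq \widehat{\mathfrak{m}_{\phi_\psi}}$, then $\mathfrak{m} = \mathfrak{m}_{\phi_\psi}$.

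\emph{Setup and explicit structure.} Since both Zelevinsky's partial order and the Zelevinsky involution respect the decomposition of a multi-segment along cuspidal lines, and since $\mathfrak{m}_{\phi_\psi}$ itself splits along such lines, we may assume all segments lie on a single cuspidal line $\{\rho\nu^{i}\}_{i\in\Z}$ and identify multi-segments with multisets of integer intervals. Writing $\psi = \bigoplus_{i} \rho \otimes \Sym^{a_{i}-1} \otimes \Sym^{b_{i}-1}$, unwinding the definition of $\phi_{\psi}$ gives $\mathfrak{m}_{\phi_\psi} = \bigsqcup_{i} \mathfrak{m}_{i}$, where each block $\mathfrak{m}_{i}$ is a Speh (``staircase'') configuration of $b_{i}$ segments of length $a_{i}$ shifted successively by one, and $\widehat{\mathfrak{m}_{\phi_\psi}} = \mathfrak{m}_{\phi_{\widehat{\psi}}} = \bigsqcup_{i} \widehat{\mathfrak{m}_{i}}$, where $\widehat{\psi}$ is the Arthur parameter obtained by swapping $\SL_{2}^{D}$ and $\SL_{2}^{A}$ in each summand.

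\emph{Main argument.} I would encode multi-segments through rank functions such as $r_{i,j}(\mathfrak{m}) = \#\{\Delta \in \mathfrak{m} : \Delta \subseteq [i,j]\}$ and $s_{i,j}(\mathfrak{m}) = \#\{\Delta \in \mathfrak{m} : \Delta \supseteq [i,j]\}$. Zelevinsky's partial order is characterized by inequalities among such rank functions, and the Knight-Zelevinsky closed formula expresses the multiplicity of each segment in $\widehat{\mathfrak{m}}$ as a min-cut on a flow network assembled from these ranks. Using the explicit staircase structure, the rank functions of $\mathfrak{m}_{\phi_\psi}$ and of $\widehat{\mathfrak{m}_{\phi_\psi}}$ can be written down in closed form; the two inequalities then sandwich the rank functions of $\mathfrak{m}$ against those of $\mathfrak{m}_{\phi_\psi}$, squeezing them to equality and forcing $\mathfrak{m} = \mathfrak{m}_{\phi_\psi}$. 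I would induct on the number of Speh blocks, with Lemma~\ref{lem main irred intro} as the base case.

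\emph{Main obstacle.} The Pyasetskii involution is \emph{not} order-reversing on the closure order, so the two inequalities genuinely carry combinatorial content and the lemma is not formal. The essential difficulty is the interaction among blocks: segments from different Speh blocks $\mathfrak{m}_{i}$ typically share a cuspidal line and are linked with one another, so a hypothetical perturbation $\mathfrak{m}$ of $\mathfrak{m}_{\phi_\psi}$ can mix different blocks via elementary moves, and its Zelevinsky dual $\widehat{\mathfrak{m}}$ need not split along the original block decomposition even though $\widehat{\mathfrak{m}_{\phi_\psi}}$ does. Controlling these cross-block moves through the Knight-Zelevinsky flow-network description is where I expect the bulk of the technical work to lie; the combinatorial heart is the bookkeeping of when a cross-block perturbation can simultaneously preserve $\geq \mathfrak{m}_{\phi_\psi}$ and, after dualizing, $\geq \widehat{\mathfrak{m}_{\phi_\psi}}$.
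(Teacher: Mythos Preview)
Your reductions are correct and match the paper's: translate to multi-segments, split along cuspidal lines, and recognize $\mathfrak{m}_{\phi_\psi}$ as a disjoint union of Speh blocks whose Zelevinsky dual is again of the same shape with the roles of $a_i$ and $b_i$ swapped. Your plan to induct on the number of blocks is also the paper's.

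Where you diverge is in the inductive engine. You propose to control $\mathfrak{m}$ through rank functions and the Knight--Zelevinsky flow-network formula for the dual; this is essentially the approach of \cite{Rid23}, and the paper explicitly positions itself as an alternative to it. The paper instead works directly with the M{\oe}glin--Waldspurger algorithm and proves a sharp peeling statement (Proposition~\ref{prop main}): if $\beta \geq \alpha$ and $\widetilde{\beta} \geq \widetilde{\alpha}$ with $\alpha = \delta_\psi$, and one selects the \emph{extremal} pair $(d,a)$ by first maximizing $a+d$ and then, among those, minimizing $d$, then $\beta$ must already contain the full Speh block $\delta_{d,a}$, and after removing this block from both $\alpha$ and $\beta$ the two inequalities persist. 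The proof of this peeling step is a direct index-tracking analysis of the M--W recursion (Lemma~\ref{lem M-W index}, Corollary~\ref{cor char of Delta}, and the splitting Lemma~\ref{lemma M-W}), showing that the algorithm factors off exactly $\widetilde{\delta_{d,a}} = \delta_{a,d}$ from $\widetilde{\beta}$. No rank functions or network flows appear, and Lemma~\ref{lem main irred intro} is not invoked as a separate base case---the single-block case falls out of the same proposition.

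What each route buys: your approach gets a closed formula for the dual but must then control min-cuts across interacting blocks, which is precisely the difficulty you flag and leave unexecuted. The paper's route avoids the flow machinery entirely, at the cost of a careful combinatorial rephrasing of the M--W algorithm; once that is in place, the extremal choice of $(d,a)$ forces enough rigidity (Lemma~\ref{lem a-d}) that the block is pinned down in $\beta$ and the induction closes. Your proposal stops short of the actual inductive step; the paper's contribution is an elementary mechanism for carrying it out.
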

In \cite{Rid23}, Riddlesden proved Lemma \ref{lem main intro}, mainly using the network description of the Zelevinsky involution as in \cite{KZ96}. Therefore, combining with the proof of \cite[Theorem 5.3]{CR22}, this provides a combinatorial approach to Vogan's Conjecture for $\GL_n(F)$.

In this paper, we give a new proof of Lemma \ref{lem main intro}, and hence provide another approach to Vogan's Conjecture for $\GL_n(F)$. In comparison with \cite{Rid23}, our proof is elementary and only involves the M{\oe}glin-Waldspurger algorithm, hence has its own interest. More precisely, our proof of Lemma \ref{lem main intro} is reduced to Proposition \ref{prop main} which is motivated by the proof of \cite[Theorem 1.16]{HLLZ22}. Roughly speaking, let $\pi$ be an irreducible representation of $\Sp_{2n}(F)$ or split $\SO_{2n+1}(F)$ of Arthur type. If $\psi$ is the ``most tempered" local Arthur parameter in the set 
\[ \Psi(\pi):=\{ \psi \ | \ \pi \in \Pi_{\psi}\},\]
see \cite[\S 10]{HLL22} for precise definition, then $\phi_{\pi}$, the $L$-parameter of $\pi$, must share a common summand(s) with $\phi_{\psi}$ that corresponds to some term $\rho\otimes \Sym^{d_i} \otimes \Sym^{a_i}$ in the decomposition 
\[    \psi= \bigoplus_{\rho} \bigoplus_{i\in I_{\rho}} \rho \otimes \Sym^{d_i} \otimes \Sym^{a_i},\]
 where the pair ($d_i,a_i$) is ``extremal" in certain sense. Moreover, if we define $\pi^{-}$, whose $L$-parameter is obtained from $\phi_{\pi}$ by removing this common summand(s), then $\pi^{-}$ is still of Arthur type, and $\pi^{-} \in \Pi_{\psi^{-}}$ where $\psi^{-}$ is obtained from $\psi$ by a similar process.
The above intuition degenerates a lot in the case of $\GL_n(F)$. Namely, the set $\Psi(\pi)$ is always a singleton so we don't need to worry which $\psi$ is the ``most tempered" one. Also, not only sharing a common summand, $\phi_\psi$ and $\phi_{\pi}$ are always identical. The question then becomes to figure out what is the ``extremal" condition for the pair ($d_i,a_i$) that can be detected by the inductive process in the M{\oe}glin-Waldspurger Algorithm. This gives the definition of $a$ and $d$ in Proposition \ref{prop main}.

Following is the structure of this paper. In \S \ref{sec preliminary}, we recall the necessary notation and preliminaries. We recall the notion of multi-segments in \S\ref{sec multi-segments}, the partial ordering on multi-segments in \S\ref{sec partial order}, and the M{\oe}glin-Waldspurger Algorithm in \S\ref{sec M-W}. In \S \ref{sec lemma MW}, we rephrase M{\oe}glin-Waldspurger Algorithm and develop certain notations and lemmas for the proofs in \S \ref{sec proof}. Then we prove Lemma \ref{lem main intro} in \S \ref{sec proof}. We rephrase Lemma \ref{lem main intro} in terms of multi-segment in \S \ref{sec rephrase}. Prove the key of reduction, Proposition \ref{prop main}, in \S \ref{sec reduction}. Finally we prove Lemma \ref{lem main intro} in \S\ref{sec finish}.

\subsection*{Acknowledgements} 

I would like to thank  Professor Baiying Liu and Professor Freydoon Shahidi for the constant support and encouragement. I also would like to thank Professor Clifton Cunningham, Mishty Ray, and Connor Riddlesden for helpful communications. 

\section{Preliminaries}\label{sec preliminary}
Denote $F$ a non-Archimeden field of characteristic 0 and $W_F$ the Weil group of $F$. Let $|\cdot|$ be the normalized absolute value of $F$, and also regarded as a character of $\GL_n(F)$ by composing with determinant.

We denote $\Pi(\GL_n(F))$ the isomorphism classes of irreducible smooth representations of $\GL_n(F)$, and denote $\Phi(\GL_n(F))$ the equivalence class of $L$-parameters of $\GL_n(F)$. Also, we let 
\[ \Pi(\GL(F)):= \bigsqcup_{n \geq 1} \Pi(\GL_n(F)),\  \Phi(\GL(F)):= \bigsqcup_{n \geq 1} \Phi(\GL_n(F)) \]

We denote $+$ the sum of multi-set (disjoint union), and $\setminus$ the difference of multi-set.
\subsection{\texorpdfstring{Langlands classification for $\GL_n(F)$}{}}\label{sec multi-segments}
In this subsection, we recall the Langlands classification for $\GL_n(F)$ and the bijection among $\Pi(\GL_n(F))$, $\Phi(\GL_n(F))$ and multi-segments of correct rank.

Let $\mathcal{C}(\GL_n(F))$ denote the isomorphism classes of supercuspidal representations of $\GL_n(F)$. By Local Langlands Correspondence of $\GL_n(F)$, we may also identify $\mathcal{C}(\GL_n(F))$ as the isomorphism classes of $n$-dimensional irreducible representations of $W_F$. We let 
\[ \mathcal{C}:= \bigsqcup_{n \geq 1} \mathcal{C}(\GL_n(F)),\]
and denote $\mathcal{C}_{\textrm{unit}}$ be the subset of $\mathcal{C}$ consists of unitary supercuspidal representations.

Let $P$ be a standard parabolic subgroup of $\GL_n(F)$ with Levi subgroup $L \cong \GL_{n_1}(F) \times \cdots \times \GL_{n_s}(F)$. An irreducible representation $\sigma$ of $L$ can be identified with 
\[\sigma = \sigma_1 \otimes \cdots \otimes \sigma_s,\]
where $\sigma_i \in \Pi(\GL_{n_i}(F))$. We denote the normalized parabolic induction $\textrm{Ind}_{P}^{\GL_n(F)} \sigma$ by
\[ \sigma_1 \times \cdots \times \sigma_s.\]

A segment $\Delta$ is a set
\[ \{ \rho|\cdot|^{b}, \rho|\cdot|^{b+1},\dots, \rho|\cdot|^{e}\},\]
where $\rho \in \mathcal{C}_{\textrm{unit}}$, $b,e \in \R$ such that $e-b \in \Z_{\geq 0}$. We shall denote $\Delta=[b,e]_{\rho}$ and call $b$ the base value of $\Delta$ and $e$ the end value of $\Delta$, and $e-b+1$ the length of $\Delta$. We also write
\[ b(\Delta):= b,\ e(\Delta):= e,\ l(\Delta):=e-b+1.\]
A multi-segment, which we usually denote by $\alpha, \beta ,\gamma$ or $\delta$, is a finite multi-set of segments. We denote the collection of segments by $\Seg$ and the collection of multi-segments $\Mseg$. For each $\rho \in \mathcal{C}_{\text{unit}}$, let $\Seg_{\rho}$ denote the subset of $\Seg$ consists of segments of the form $[b,e]_{\rho}$, and let $\Mseg_{\rho}$ denote the subset of $\Mseg$ consists of multi-sets of segments in $\Seg_{\rho}$. For each $\alpha \in \Mseg$, there is a unique decomposition
\[ \alpha= \sum_{\rho \in \mathcal{C}_{\textrm{unit}}} \alpha_{\rho},\]
with $\alpha_{\rho} \in \Mseg_{\rho}$ and $\alpha_{\rho}= \emptyset$ except a finite number of $\rho \in \mathcal{C}_{\textrm{unit}}$.

For each segment $[y,x]_{\rho}$, let $\Delta_{\rho}[x,y]$ be the unique irreducible subrepresentation of the parabolic induction
\[ \rho|\cdot|^{x} \times \rho|\cdot|^{x-1} \times \cdots \times \rho|\cdot|^{y}. \]

The Langlands classification of $\GL_n(F)$ states the following. Any representation $\pi\in \Pi(\GL_n(F))$ can be realized as the unique irreducible subrepresentation of a parabolic induction
\[ \Delta_{\rho_1}[x_1,y_1]\times \cdots \times \Delta_{\rho_f}[x_f, y_f], \]
where
\begin{enumerate}
    \item [$\oldbullet$] $n= \sum_{i=1}^f \dim(\rho_i) (x_i-y_i+1)$,
        \item [$\oldbullet$] $\rho_i \in \mathcal{C}_{\textrm{unit}}$ , and
    \item [$\oldbullet$] $x_1+y_1 \leq \dots \leq x_f+y_f$.
\end{enumerate}
Here $\dim(\rho_i)$ is the dimension of $\rho_i$ as an irreducible representation of $W_F$. Moreover, the multi-set 
\[\{ \Delta_{\rho_1}[x_1,y_1],\ldots, \Delta_{\rho_f}[x_f, y_f] \}\]
with above requirement is unique.  With these notations, we may write down the $L$-parameter of $\pi$ as 
\[ \phi_\pi= \rho_1|\cdot|^{\half{x_1+y_1}} \otimes \Sym^{x_1-y_1}  \oplus \cdots \oplus \rho_f|\cdot|^{\half{x_f+y_f}} \otimes \Sym^{x_f-y_f},\]
where $\Sym^{b+1}$ is the unique $b$-dimensional irreducible analytic representation of $\SL_2(\BC)$. Also, we associate the following multi-segment to $\pi$.
\[ \delta_\pi:= \{ [y_1,x_1]_{\rho_1},\dots, [y_f,x_f]_{\rho_f}\}\]
Thus with above notation, we have the following correspondence.
\[
\begin{tikzcd}
    \Pi(\GL(F))\ar[r,leftrightarrow]& \Phi(\GL(F))\ar[r,leftrightarrow] & \Mseg\\
    \pi \ar[r,mapsto]&  \phi_{\pi} \ar[r,mapsto]& \delta_\pi.
\end{tikzcd}
\]
Suppose $\phi$ is an $L$-parameter of $\GL_n(F)$. We also denote $\delta_{\phi}:=\delta_{\pi}$ where $\pi$ is the unique representation of $\Pi(\GL_n(F))$ such that $\phi_{\pi}=\phi$.

\subsection{A partial ordering on multi-segments}\label{sec partial order}
In this subsection, we recall the partial ordering on $\Mseg$ introduced in \cite{Zel81}.

Suppose $\Delta_1=[b_1,e_1]_{\rho_1}$ and $\Delta_2=[b_2,e_2]_{\rho_2}$ are two segments. We say $\Delta_1$ and $\Delta_2$ are \emph{linked} if the union $\Delta_1 \cup \Delta_2$ (as a set) is also a segment, and $\Delta_1 \not\supseteq \Delta_2 $, $\Delta_2 \not\supseteq \Delta_1 $. In particular, $\Delta_1$ and $\Delta_2$ are linked only if $\rho_1 \cong \rho_2$ (recall that we require $\rho_1,\rho_2$ to be unitary). 

Now let $\alpha, \beta$ be two multi-segments. We say $\beta$ is obtained from $\alpha$ by performing a single \emph{elementary operation} if we can form $\beta$ from $\alpha$ by replacing a sub-multi-set $\{\Delta_1, \Delta_2\}$ of $\alpha$ by 
 \[\begin{cases} 
   \{ \Delta_1, \Delta_2\} &\text{ if } \Delta_1, \ \Delta_2 \text{ are not linked},\\
   \{ \Delta_1 \cup \Delta_2,\  \Delta_{1} \cap \Delta_2\} &\text{ if }\Delta_1,\ \Delta_2 \text{ are linked and } \Delta_1 \cap \Delta_2 \neq \emptyset,\\
    \{ \Delta_1 \cup \Delta_2 \} &\text{ if }\Delta_1,\ \Delta_2 \text{ are linked and } \Delta_1 \cap \Delta_2 = \emptyset.
    \end{cases}\]
\begin{defn}\label{def partial order}
    Let $\alpha, \beta$ be two multi-segments. We define $\alpha \geq \beta$ if $\beta$ can be obtained from $\alpha$ by performing a sequence of elementary operations. This gives a partial ordering on $\Mseg$.
\end{defn}

Suppose $\delta= \{\Delta_1,\dots, \Delta_r\}$ is a multi-segment. We define
\[ \supp (\delta):= \sum_{i=1}^r \Delta_i,\]
which is a multi-set of supercuspidal representations. Then it is not hard to see that $\alpha \geq \beta$ only if $\supp(\alpha)=\supp(\beta)$. Also, if $\alpha= \sum_{\rho \in \mathcal{C}_{\textrm{unit}}} \alpha_{\rho}$ and $\beta= \sum_{\rho \in \mathcal{C}_{\textrm{unit}}} \beta_{\rho}$, then $\alpha \geq \beta$ if and only if $\alpha_{\rho} \geq \beta_{\rho}$ for every $\rho.$

It is proved in \cite[\S 2]{Zel81} that the partial ordering in Definition \ref{def partial order} is exactly the closure ordering on the orbits of Vogan varieties.
\begin{thm}\cite[Theorem 2.2]{Zel81} \label{thm partial order}
    Let $\phi_1,\phi_2$ be two $L$-parameters of $\GL_n(F)$. The followings are equivalent.
    \begin{enumerate}
        \item [(a)]$\delta_{\phi_1} \geq \delta_{\phi_2}$.
        \item [(b)]$\phi_1 \geq \phi_2$.
    \end{enumerate}
Here the $\geq$ in Part (b) is the closure ordering on the associated Vogan variety.
\end{thm}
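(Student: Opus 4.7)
The plan is to introduce numerical rank invariants computed from both a multi-segment and a point of the Vogan variety, and to show that both partial orders are characterized by the same system of inequalities on these invariants. Since both orderings preserve $\supp(\cdot)$ and decompose along the distinct $\rho \in \mathcal{C}_{\mathrm{unit}}$ (the Vogan variety itself being a product indexed by $\rho$), I would first reduce to the case where $\phi_1, \phi_2$ share a common semisimple part supported on a single $\rho$. Writing $m_x$ for the multiplicity of $\rho|\cdot|^x$ in this support, the associated Vogan variety can be identified with the affine space of graded degree-$+1$ nilpotent operators $f : V \to V$, with $V = \bigoplus_x V_x$, $\dim V_x = m_x$, acted on by $\prod_x \GL(V_x)$; its orbits are in bijection with the multi-segments in $\Mseg_{\rho}$ of the given support via the graded Jordan decomposition.

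For each $x \leq y$, I would define the common invariant
\[ r_{x,y}(\delta) := \#\{\Delta \in \delta : b(\Delta) \leq x \text{ and } y \leq e(\Delta)\}\]
on the combinatorial side, and $r_{x,y}(f) := \rank\bigl(f^{y-x}|_{V_x}\bigr)$ on the geometric side. A direct graded-Jordan computation yields $r_{x,y}(f) = r_{x,y}(\delta)$ whenever $f$ represents the orbit labeled by $\delta$. Since $f \mapsto \rank(f^{y-x}|_{V_x})$ is lower semi-continuous, closure dominance forces $r_{x,y}(\phi_1) \leq r_{x,y}(\phi_2)$ for all $x \leq y$ whenever $\phi_1 \geq \phi_2$; conversely, an explicit one-parameter family splitting a single graded Jordan block into two linked blocks (the geometric realization of an inverse elementary operation) produces the desired orbit specialization one rank unit at a time, so the rank inequalities in fact characterize the closure order.

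It remains to establish the same rank characterization on the combinatorial side: $\alpha \geq \beta$ iff $r_{x,y}(\alpha) \leq r_{x,y}(\beta)$ for all $x \leq y$. The forward direction is a short case analysis verifying that each elementary operation $\{\Delta_1, \Delta_2\} \rightsquigarrow \{\Delta_1 \cup \Delta_2, \Delta_1 \cap \Delta_2\}$ (or $\{\Delta_1 \cup \Delta_2\}$ in the disjoint-linked case) either fixes or strictly increases every $r_{x,y}$, the strict increase occurring precisely when $[x,y]_{\rho}$ is covered by $\Delta_1 \cup \Delta_2$ but by neither $\Delta_1$ nor $\Delta_2$. For the reverse direction, given $\alpha \neq \beta$ in $\Mseg_{\rho}$ with $r_{x,y}(\alpha) \leq r_{x,y}(\beta)$ everywhere, I would pick a pair $(x,y)$ extremal for strict inequality (say $y - x$ minimal) and argue that this extremality forces two segments of $\alpha$ to be linked in a way that their fusion by a single elementary operation produces $\alpha'$ with $\alpha \geq \alpha'$, all inequalities $r_{x,y}(\alpha') \leq r_{x,y}(\beta)$ preserved, and the nonnegative integer $\sum_{x \leq y} \bigl(r_{x,y}(\beta) - r_{x,y}(\alpha)\bigr)$ strictly decreased. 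Induction on this statistic then terminates with $\alpha' = \beta$.

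The main obstacle is the reverse combinatorial step: turning the strict inequality at the chosen extremal $(x,y)$ into a concrete pair of linked segments of $\alpha$ to combine, and checking that the combined multi-segment continues to satisfy every rank inequality against $\beta$. Once this extremal argument is in hand, the equivalence assembles cleanly from semi-continuity of rank on the Vogan variety, the graded Jordan dictionary, and the chaining of single-step degenerations.
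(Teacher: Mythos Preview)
The paper does not prove this theorem; it is quoted from \cite[Theorem 2.2]{Zel81} and used as a black box. So there is no ``paper's own proof'' to compare against. Your outline is in fact the standard route taken in Zelevinsky's original argument: identify the Vogan variety with the graded nilpotent representation space of a type-$A$ quiver, parametrize orbits by multi-segments via graded Jordan type, and characterize the closure order by the rank functions $r_{x,y}$ through lower semi-continuity. The forward combinatorial direction (elementary operations increase all $r_{x,y}$) and the geometric degeneration realizing a single inverse elementary operation are exactly as you describe.

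The one place your sketch is genuinely incomplete is the step you yourself flag: given $r_{x,y}(\alpha)\le r_{x,y}(\beta)$ everywhere with strict inequality somewhere, producing a specific linked pair in $\alpha$ whose fusion preserves all inequalities. Choosing $(x,y)$ with $y-x$ minimal among strict inequalities is the right move, but you still need to argue that minimality forces the existence of segments $\Delta_1,\Delta_2\in\alpha$ with $e(\Delta_1)=y-1$, $b(\Delta_2)=x+1$ (or the analogous configuration) that are actually linked, and that fusing them does not overshoot any $r_{x',y'}(\beta)$. This requires a slightly more careful extremality choice (typically one fixes $y$ maximal first, then $x$ minimal, or works with the lexicographic order) and a short counting argument comparing multiplicities of segments with a given endpoint in $\alpha$ versus $\beta$. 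None of this is deep, but as written your induction step is an assertion rather than an argument.
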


\subsection{M{\oe}glin-Waldspurger algorithm}\label{sec M-W}
In this subsection, we recall the statement of the M{\oe}glin-Waldspurger algorithm and introduce the related notations. For simplicity, we shall assume any multi-segment in this section is in $\Mseg_{\rho}$ for a fixed $\rho \in \mathcal{C}_{\textrm{unit}}$, and omit $\rho$ in the notation.

Suppose $\Delta_1=[b_1,e_1]$ and $\Delta_2=[b_2,e_2]$ are linked. We say $\Delta_1$ \emph{precedes} $\Delta_2$ if $b_1<b_2$ and $a_1<a_2$. For $\Delta=[b,e]$, we define \[\Delta^{-}:= \begin{cases}
    [b,e-1] & \textrm{ if }b\neq e,\\
    \emptyset & \textrm{ otherwise.}
\end{cases}\]

With these definitions, we are ready to state the M{\oe}glin-Waldspurger algorithm.

\begin{algo}[{M{\oe}glin}-Waldspurger Algorithm]\label{algo M-W}
Suppose $\alpha$ is a multi-segment. We associate a segment $M(\alpha)$ as follows.
\begin{enumerate}
    \item [(1)] Set $e$ to be the largest end value of segments in $\alpha$. Set $m:=e$.
    \item [(2)] Consider all segments in $\alpha$ with end value $m$. Among these, choose a segment with the
largest base value and call it $\Delta_{m}$.
\item [(3)] Consider the set of all segments in $\alpha$ that precede $\Delta_{m}$ with end value $m-1$. If this is empty,
go to step (5). Otherwise, choose a segment from this set with the largest base value and
call it $\Delta_{m-1}$.
\item [(4)]Set $m:=m-1$ and go to step(3).
\item [(5)] Return $M(\alpha)=[m,e]$.
\end{enumerate}
Next, we inherit the following notation from the above procedure. Define $\alpha\setminus M(\alpha)$ to be the multi-segment obtained from $\alpha$ by replacing $\Delta_{i}$ with $\Delta_i^{-}$ for all $m \leq  i \leq  e$ and removing empty sets.

Finally, we set 
\[\widetilde{\alpha}:=\{ M(\alpha)\}+( \widetilde{\alpha\setminus M(\alpha)}) \]
if $\alpha \neq \emptyset$ and $\widetilde{\alpha}:=\emptyset$ otherwise.
\end{algo}

For general multi-segment $\alpha= \sum_{\rho \in \mathcal{C}_{\textrm{unit}}} \alpha_{\rho}$, we define $\widetilde{\alpha}:= \sum_{\rho \in \mathcal{C}_{\textrm{unit}}} \widetilde{\alpha_{\rho}}$.

The main result of \cite{MW86} is that Algortihm \ref{algo M-W} computes the Zelevinsky invoultion on multi-segments and also the Pyasetskii involution on $L$-parameters of $\GL_n(F)$.
\begin{thm}\cite[Théorème II.13]{MW86}\label{thm M-W}
For any $L$-parameter $\phi$ of $\GL_n(F)$, we have 
\[ \delta_{\widehat{\phi}}= \widetilde{\delta_{\phi}}.\]
\end{thm}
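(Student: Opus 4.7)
The plan is to prove Theorem \ref{thm M-W} by induction on the number of segments in $\delta_\phi$. As a preliminary reduction, since both the Pyasetskii involution and the M{\oe}glin-Waldspurger algorithm respect the decomposition $\alpha = \sum_\rho \alpha_\rho$, it suffices to treat the case $\delta_\phi \in \Mseg_\rho$ for a fixed $\rho \in \mathcal{C}_{\textrm{unit}}$, so I omit $\rho$ below.

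The backbone is a representation-theoretic translation: let $\pi \in \Pi(\GL_n(F))$ correspond to $\phi$, so that passing from $\phi$ to $\widehat{\phi}$ corresponds to the Zelevinsky involution $\pi \mapsto \widehat{\pi}$. On the Grothendieck group, $\widehat{\pi}$ admits an explicit description via the Aubert-Zelevinsky involution as an alternating sum of parabolic inductions of Jacquet modules. I would use this description to extract the leading segment of the Langlands data of $\widehat{\pi}$ from partial Jacquet modules of $\pi$ along $\rho|\cdot|^x$ for varying $x$.

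The inductive step then decomposes into two parts. First, I show that $M(\delta_\phi) = [m,e]$ coincides with the segment of $\delta_{\widehat{\phi}}$ with the largest end value, by identifying $e$ and $m$ independently from the representation theory of $\widehat{\pi}$: the value $e$ is forced as the maximal $x$ with $\rho|\cdot|^x$ appearing in the supercuspidal support, while $m$ arises from an iterative highest-derivative calculation along $\rho|\cdot|^e, \rho|\cdot|^{e-1}, \dots$, with the descent continuing as long as a segment with the required end value and precedence relation is available in $\delta_\phi$. This matches exactly the greedy selection $\Delta_e, \Delta_{e-1}, \dots, \Delta_m$ in Algorithm \ref{algo M-W}. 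Second, I show that removing this leading segment from the Langlands data of $\widehat{\pi}$ produces the Zelevinsky involution of the representation whose Langlands data is $\delta_\phi \setminus M(\delta_\phi)$; the inductive hypothesis then yields $\delta_{\widehat{\phi}} = \{M(\delta_\phi)\} + \widetilde{\delta_\phi \setminus M(\delta_\phi)} = \widetilde{\delta_\phi}$.

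The main obstacle will be the first part: matching the algorithmic greedy chain with the chain produced by successive highest-derivative calculations on $\widehat{\pi}$. This requires careful bookkeeping to check that the precedence condition $b_{i-1} < b_i$ in Algorithm \ref{algo M-W} corresponds precisely to the linkedness condition needed for the relevant partial Jacquet module to survive (rather than cancel) under the Aubert-Zelevinsky formula at each descent step. Once this matching is in hand, the second part reduces to the standard compatibility of the Zelevinsky involution with socles of parabolic inductions, and the induction closes cleanly.
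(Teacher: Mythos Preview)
The paper does not prove Theorem \ref{thm M-W}; it is quoted from \cite[Th\'eor\`eme II.13]{MW86} and used as a black box. So there is no in-paper proof to compare your proposal against.

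That said, your outline is essentially the strategy of the original M{\oe}glin--Waldspurger argument: reduce to a single cuspidal line, interpret the Pyasetskii/Zelevinsky involution representation-theoretically, and peel off the leading segment of $\widehat{\pi}$ by an iterated highest-derivative computation that matches the greedy chain $\Delta_e,\Delta_{e-1},\dots,\Delta_m$ of Algorithm \ref{algo M-W}. The genuinely delicate point you flag---that the precedence condition $b(\Delta_{l-1})<b(\Delta_l)$ is exactly what governs survival of the relevant term under the Aubert--Zelevinsky alternating sum at each step---is indeed the heart of \cite{MW86}, and making this precise is most of the work; your sketch names it but does not carry it out. If you intend to fill this in, you will need a careful statement about how the $\rho|\cdot|^{l}$-derivative interacts with Langlands quotients (or equivalently with the standard-module filtration), together with an argument that the ``largest base value'' choice in step (3) of the algorithm is forced rather than merely sufficient. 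The second part of your induction (compatibility of removing the leading segment with the involution on the remainder) is comparatively routine once the first is in hand.
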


\section{Rephrasing M{\oe}glin-Waldspurger algorithm}\label{sec lemma MW}

In this section, we introduce certain notation and a useful observation (Lemma \ref{lem M-W index} below) for M{\oe}glin-Waldspurger algorithm. Then we rephrase Algorithm \ref{algo M-W} in Corollary \ref{cor char of Delta}.

In the following discussion, we inherit the notation in Algorithm \ref{algo M-W}. Let $\beta$ be a multi-segment. We denote $\beta^0:=\beta$ and $\beta^i:= \beta^{i-1} \setminus M(\beta^{i-1})$, so that
\[ \widetilde{\beta}=  \{M(\beta^{l})\}_{l=0}^{i-1}+ \widetilde{\beta^i}. \]

Write $\beta=\{\Delta_j\}_{j \in J}$, and $\beta^{i}=\{\Delta_{j}^{i}\}_{j \in J^{i}}$. For $i >0$, we fix an injection $J^{i} \hookrightarrow J^{i-1}$, which identifies $J^{i}$ as a subset of $J^{i-1}$, with the following conditions.
\begin{enumerate}
    \item [$\oldbullet$] $\Delta_{j}^{i} \subseteq \Delta_j^{i-1}$, and 
    \item [$\oldbullet$] if $\Delta_{j}^{i-1} \neq \Delta_j^{i}$, then $\Delta_{j}^{i}= (\Delta_{j}^{i-1})^{-}$.
\end{enumerate}
In this way, we identify each $J^i$ as a subset of $J=J^0$. Define
\[ K^{i}:= \{ j \in J^i \ | \ \Delta_{j}^{i} \neq \Delta_{j}^{i+1}\}.\]
Then
\[ M(\beta^{i})= \{ e(\Delta_{j}^{i})\ | \ j \in K^i\}.\]
Write $M(\beta^{i})=[m^i, e^i]$. For $m^i \leq l \leq e^i$, let $k_l^i$ be the unique index in $K^i$ such that $e(\Delta_{k_l^i}^i)= l$.

With above notation, Algorithm \ref{algo M-W} can be rephrased as follows.

\begin{lemma}\label{lem char of Delta}
    With above notation, the following properties uniquely characterize $M(\beta^i)=[m^i,e^i]$ and $\{\Delta_{k_l^i}^i\}_{m^i \leq l \leq e^i}$.
    \begin{enumerate}
        \item [(1)] $ e^{i}= \max\{ e(\Delta_j^i)\ |\ j \in J^i \}. $
        \item [(2)] $  b(\Delta_{k_{e^i}^i}^i)= \max\{ b(\Delta_j^i) \ | \ j \in J^i,\  e(\Delta_j^i)=e^i    \}$.
        \item [(3)] $  b( \Delta_{k_l^i}^i)= \max\{ b(\Delta_j^i) \ | \ j \in J^i,\  e(\Delta_j^i)=l,\ b(\Delta_j^i)< b(\Delta_{k_{l+1}^i}^i)    \}$
        \item [(4)] $ \{ j \in J^i \ | \ e(\Delta_j^i)=m^i-1,\ b(\Delta_j^i)< b(\Delta_{k_{m^i}^i}^i) \}= \emptyset.$
    \end{enumerate}
\end{lemma}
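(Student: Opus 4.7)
The proposal is to prove the lemma by direct verification, showing that Properties (1)--(4) are simply a translation of Steps (1)--(5) of the M{\oe}glin-Waldspurger Algorithm into the language of base and end values. Existence of segments satisfying (1)--(4) is the content of the algorithm; uniqueness follows because each clause prescribes a \emph{strict maximum} of a numerical set, so the value $b(\Delta_{k_l^i}^i)$ (and hence the segment it picks out from $\beta^i$ as a multi-set) is forced.

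More concretely, I would fix $i$ and work inside $\beta^i$, writing $e := e^i$ and tracing Steps (1)--(5) with $\alpha$ replaced by $\beta^i$. Step (1) of Algorithm \ref{algo M-W} directly says $e = \max\{e(\Delta_j^i) \mid j \in J^i\}$, which is Property (1). Step (2) then asks for the segment $\Delta_e = \Delta_{k_{e^i}^i}^i$ with end value $e$ and largest possible base value, which is precisely Property (2). For the inductive step, suppose $\Delta_{k_{l+1}^i}^i$ has been chosen with base value $b_{l+1} := b(\Delta_{k_{l+1}^i}^i)$; Step (3) of the algorithm then looks at segments of $\beta^i$ that \emph{precede} $\Delta_{k_{l+1}^i}^i$ and have end value $l = (l+1) - 1$. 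Unpacking the precedence condition from \S\ref{sec M-W} gives the two requirements $b(\Delta_j^i) < b_{l+1}$ and $e(\Delta_j^i) < l+1$, but since we are already restricting to segments with $e(\Delta_j^i) = l < l+1$, the second requirement is automatic. Thus Step (3) chooses the segment of largest base value in $\{\Delta_j^i \mid e(\Delta_j^i) = l,\ b(\Delta_j^i) < b_{l+1}\}$, which is exactly Property (3). The algorithm halts and outputs $m^i = m$ precisely when the set in Step (3) becomes empty one step below, which is Property (4).

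For uniqueness, I would argue by descending induction on $l$ from $e^i$ to $m^i$: Property (1) determines $e^i$ as a numerical maximum, Property (2) then determines $b(\Delta_{k_{e^i}^i}^i)$ uniquely, and having determined $b(\Delta_{k_{l+1}^i}^i)$, Property (3) determines $b(\Delta_{k_l^i}^i)$ uniquely as a maximum over a set cut out by the previous choice. Property (4) pins down $m^i$ as the smallest index for which the descending chain can no longer be continued. Each $\Delta_{k_l^i}^i$ is then determined as an element of the multi-set $\beta^i$ by its pair $(b, e) = (b(\Delta_{k_l^i}^i), l)$; if $\beta^i$ contains several copies of this segment, any choice yields the same multi-set $\{\Delta_{k_l^i}^i\}_{m^i \leq l \leq e^i}$, which is all the lemma requires.

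I do not expect a genuine obstacle here — the content is essentially unwinding definitions — but the one point that needs care is the translation of ``precedes'' in Step (3), where one must verify that the end-value inequality is automatic from the setup so that only the base-value inequality survives in Property (3). This matters because otherwise Property (3) would be a \emph{weaker} condition than what the algorithm actually enforces, and uniqueness could fail.
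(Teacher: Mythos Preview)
Your proposal is correct and matches the paper's treatment: the paper states this lemma without proof, presenting it explicitly as a rephrasing of Algorithm~\ref{algo M-W}, and your unpacking of Steps (1)--(5) into Properties (1)--(4) is exactly the intended content. Your care with the ``precedes'' condition in Step~(3)---observing that the end-value inequality is automatic so only the base-value inequality survives---is the one nontrivial check, and you handle it correctly.
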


Finally, let $e:= e^0= \max\{j \in J\ | \ e(\Delta_j)\}$ and set
\[ t:= \max(\{i \in \Z_{\geq 0}\ | \ e^i=e(M(\beta^i))=e \})+1.\]
Note that $t= \#\{ j \in J \ | \ e(\Delta_j)=e\}$. When there are more than one multi-segments involved in the argument, we write $J^i=J^i(\beta), K^{i}=K^{i}(\beta)$, $e=e(\beta)$ and $t=t(\beta)$ to specify the multi-segment $\beta$.

We give an example to demonstrate these notations.

\begin{exmp}
    Consider $\beta= \{\Delta_{j}\}_{j\in J}$, where $J=\{1,2,\dots, 8\}$, and
    \begin{align*}
        \Delta_1=[2,2], \ \Delta_{2}=[\ \ 0,1],&\ \Delta_{3}=[-2,0],\ \Delta_{4}=[-3,-1],\\
        \Delta_5=[1,2], \ \Delta_{6}=[-1,1],&\ \Delta_{7}=[-2,0],\ \Delta_{8}=[-2,1].
    \end{align*}
    We have $e(\beta)=2$ and $t(\beta)=2$. Applying Algorithm \ref{algo M-W} once, we obtain $M(\beta)=[m^0,e^0]=[-1,2]$, and $\beta^1= \beta \setminus M(\beta)$ is obtained from $\beta$ by replacing $\{\Delta_j\}_{j=1}^4$ by $\{\Delta_j^{-}\}_{j=1}^4$. Thus we can choose $K^0=\{1,2,3,4\}$ and $(k_{2}^0,k_{1}^0,k_{0}^0,k_{-1}^0)= (1,2,3,4)$. Note that one can also choose $K^0=\{1,2,7,4\}$ since $\Delta_{3}=\Delta_7$. 

    With the choice $K^0=\{1,2,3,4\}$, we obtain $J^1=\{2,3,\dots, 8\} \subseteq J$ and 
    \[ \beta^1= \{\Delta^1_j\}_{j=2}^8= \{\Delta_j^-\}_{j=2}^4 \sqcup \{\Delta_{j}\}_{j=5}^8.\]
    Apply Algorithm \ref{algo M-W} again, we obtain $M(\beta^1)=[m^1,e^1]=[0,2]$ and the only choice of $K^1$ is $\{ 5,6,7 \}$ with $(k_{2}^1,k_1^1,k_0^{1})=(5,6,7)$. We obtain $J^2=\{2,3,\dots,8\} \subseteq J^1 \subseteq J$ and 
    \[ \beta^2= \{ \Delta_j^2 \}_{j=2}^8= \{\Delta_j^-\}_{j=2}^4 \sqcup \{\Delta_{j}^-\}_{j=5}^7 \sqcup\{\Delta_8\}. \]
\end{exmp}

Observe that in above example, $K^0 \cap K^1= \emptyset$, or in other words, the index sets $\{K^i(\beta)\}_{i=0}^{t(\beta)-1}$ are mutually disjoint. One can also check this on the multi-segment in Example \ref{exmp lem}, which is slightly more complicated. We prove this interesting phenomenon for all multi-segments along with other properties in the following lemma.

\begin{lemma}\label{lem M-W index}
With the notation developed in this section, the followings hold for any multi-segment $\beta$.
\begin{enumerate}
    \item [(a)] $m^0\leq  m^1 \leq \cdots \leq  m^{t-1}$.
    \item [(b)] For any $0 \leq i \leq t-1$ and $m^i \leq l \leq e$, we have containment
  \[ \Delta_{k_{l}^0}^{0} \subseteq \Delta_{k_{l}^1}^{1} \subseteq \cdots \subseteq \Delta_{k_l^i}^{i}. \]
    \item [(c)]The sets $ \{K^i\}_{i=0}^{t-1}$ are mutually disjoint.
    \item [(d)] For $0 \leq i \leq t-1$  and $m^i \leq l \leq e$, we have 
    \[ \Delta_{k_l^i}=\Delta_{k_l^i}^0= \Delta_{k_l^i}^1 = \cdots =\Delta_{k_l^i}^i \supsetneq (\Delta_{k_l^i}^i)^{-} = \Delta_{k_l^i}^{i+1}= \cdots =\Delta_{k_l^i}^{t-1}. \]
\end{enumerate}
\end{lemma}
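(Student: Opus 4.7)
The plan is to induct on the parameter $t = t(\beta)$. The base case $t=1$ is immediate (parts (a)--(c) are vacuous and (d) is trivial). For the inductive step $t \ge 2$, I pass to $\beta^1 = \beta \setminus M(\beta)$: because $M(\beta)$ consumes exactly one segment of end value $e$, we still have $e(\beta^1) = e$ and $t(\beta^1) = t-1$. Applying the induction hypothesis to $\beta^1$ and reindexing via $m^i(\beta^1) = m^{i+1}(\beta)$, $K^i(\beta^1) = K^{i+1}(\beta)$, and $\Delta_j^i(\beta^1) = \Delta_j^{i+1}(\beta)$, all four assertions follow for $\beta$ restricted to indices $1 \le i \le t-1$. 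What remains is to handle the additional index $i = 0$: (a$'$) $m^0 \le m^1$; (b$'$) $\Delta_{k_l^0}^0 \subseteq \Delta_{k_l^1}^1$ for $m^1 \le l \le e$; (c$'$) $K^0 \cap K^i = \emptyset$ for $1 \le i \le t-1$; and (d$'$), which says $k_l^0 \notin K^i$ for $1 \le i \le t-1$ and is precisely (c$'$) restated.

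I would prove (a$'$) and (b$'$) jointly by downward induction on $l$ from $e$ to $m^1$. At $l=e$, the segments of end value $e$ in $\beta^1$ are exactly those of $\beta^0$ minus $\Delta_{k_e^0}^0$, so Lemma \ref{lem char of Delta}(2) gives $b(\Delta_{k_e^1}^1) \le b(\Delta_{k_e^0}^0)$. For $l < e$: assume $\Delta_{k_{l+1}^0}^0 \subseteq \Delta_{k_{l+1}^1}^1$ and suppose $l \ge m^1$. The only segment of end value $l$ appearing in $\beta^1$ but not in $\beta^0$ is $\Delta_{k_{l+1}^0}^{0,-}$, whose base $b(\Delta_{k_{l+1}^0}^0) \ge b(\Delta_{k_{l+1}^1}^1)$ violates the strict inequality required by Lemma \ref{lem char of Delta}(3), so it cannot equal $\Delta_{k_l^1}^1$. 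Hence $\Delta_{k_l^1}^1$ (at the same index) lies in $\beta^0$, which simultaneously witnesses $l \ge m^0$ and serves as a candidate for Lemma \ref{lem char of Delta}(3) in $\beta^0$, yielding $b(\Delta_{k_l^1}^1) \le b(\Delta_{k_l^0}^0)$.

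For (c$'$), suppose toward contradiction $j \in K^0 \cap K^i$ with $1 \le i \le t-1$, and write $j = k_{l_0}^0$ and $j = k_{l_i}^i$. The inductive disjointness of $K^1, \ldots, K^{t-1}$ forces $j$ to be shortened exactly once among steps $1, \ldots, t-1$, namely at step $i$; counting end-value drops then yields $l_i = l_0 - 1$, so $\Delta_j^i = (\Delta_j^0)^-$ shares its base with $\Delta_j^0 = \Delta_{k_{l_0}^0}^0$. Lemma \ref{lem char of Delta}(3) at step $i$ gives $b(\Delta_j^i) < b(\Delta_{k_{l_0}^i}^i)$, whereas part (b)---assembled from (b$'$) at $i=1$ combined with the inductive (b) for $\beta^1$---gives $b(\Delta_{k_{l_0}^0}^0) \ge b(\Delta_{k_{l_0}^i}^i)$. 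These two inequalities are incompatible, completing the contradiction.

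The main obstacle is (c$'$): one must marry global information (disjointness of later chains, inherited from the induction on $\beta^1$) with the local maximum-base characterization of Lemma \ref{lem char of Delta}(3) in order to produce a strict-versus-non-strict contradiction. The rest of the argument is careful bookkeeping of how $\beta^0$ and $\beta^1$ differ as multi-sets, with the recurring observation that the new shortened segments $\Delta_{k_{l+1}^0}^{0,-}$ cannot serve as chain-$1$ candidates because their bases are inherited from $\Delta_{k_{l+1}^0}^0$ and are therefore too large to satisfy the strict inequality in step~(3) of Lemma \ref{lem char of Delta}.
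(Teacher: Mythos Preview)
Your proof is correct and takes a genuinely different route from the paper's. Both argue by induction on $t$, but they peel off opposite ends of the family $\{K^i\}_{i=0}^{t-1}$. The paper removes the \emph{last} chain: it sets $\beta' := \{\Delta_j\}_{j \in J \setminus K^{t-1}}$, proves three claims (essentially your (a$'$), (b$'$), (c$'$) with $0$ replaced by $t-1$) by downward induction on $l$, and then must verify that running the M{\oe}glin--Waldspurger algorithm on $\beta'$ reproduces the same index sets $K^0,\ldots,K^{t-2}$ before invoking the inductive hypothesis. You instead pass to $\beta^1 = \beta \setminus M(\beta)$, peeling off the \emph{first} step. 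The payoff is that the identifications $K^i(\beta^1)=K^{i+1}(\beta)$ and $\Delta_j^i(\beta^1)=\Delta_j^{i+1}(\beta)$ are tautological from the recursive definition, so no extra verification is needed before applying the inductive hypothesis; all the work is concentrated in the boundary comparisons (a$'$)--(c$'$). The downward induction on $l$ in your (a$'$)/(b$'$) and the strict-versus-non-strict contradiction in (c$'$) mirror exactly the paper's Claims (i)/(ii) and Claim (iii), just anchored at $i=0$ rather than $i=t-1$. One small point you glossed over: the full statement of (d) for $i\ge 1$ also needs $\Delta_{k_l^i}^0 = \Delta_{k_l^i}^1$, i.e.\ $k_l^i \notin K^0$, which is the \emph{other} direction of (c$'$); but this is of course immediate once (c$'$) is established.
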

\begin{proof}
First, observe that Part (a) implies the indices $k_{l}^0,\dots, k_{l}^{i-1}$ in Part (b) is well-defined. Also, Part (d) is a direct consequence of Part (c). To prove Parts (a), (b), (c), we apply induction on $t=t(\beta)$. If $t=1$, the conclusions trivially hold. We assume $t>1$ from now on.

We claim that for any $l \in \{ e, e-1,\dots, m^{t-1} \}$, the followings hold for any $0 \leq i< t-1$.
\begin{enumerate}
    \item [(i)] $m^i\leq l$.
    \item [(ii)] $\Delta_{k_{l}^i}^{i} \subseteq \Delta_{k_{l}^{t-1}}^{t-1}$.
    \item [(iii)] $ k_{l}^{t-1}$ is not in $K^{i}$.
\end{enumerate}

Assume the Claims are verified. Set $J':= J \setminus K^{t-1}$ and $\beta':= \{\Delta_j\}_{j \in J'}$. Then
\[ t(\beta')=\#\{j \in J'\ | \ e(\Delta_j)=e\}= t-1.\]
Also, for any $0 \leq i<t-1$, Claim (iii) implies $K^{t-1} \cap K^i= \emptyset$, and hence Claim (ii) implies for $m^i \leq l \leq e^i$,
\[ \Delta_{k_l^i}^i \subseteq \Delta_{k_{l}^{t-1}}^{t-1}=\Delta_{k_{l}^{t-1}}^{i}. \]
This gives $b(\Delta_{k_l^i}^i ) \geq b(\Delta_{k_{l}^{t-1}}^{i} )$. From this inequality, for $0 \leq i < t-1$ and  $m^i \leq l \leq e^i$, one can inductively check that $[m^i, e^i]$ and $\Delta_{k_l^i}^i \in (\beta')^{i}$, satisfy the properties in Lemma \ref{lem char of Delta}. Therefore, we can choose $K^{i}(\beta')= K^i(\beta)$. Thus the induction hypothesis for $\beta'$ implies $m^0 \leq \cdots \leq m^{t-2}$, Part (b) holds for $0 \leq i\leq t-2$, and $\{K^i\}_{i=0}^{t-2}$ are mutually disjoint. Together with the Claims, this verifies all of the desired conclusions for $\beta$.

Now we prove the Claims by applying induction on $l$. When $l=e$, Claims (i) trivially holds. Claim (ii) follows from Lemma \ref{lem char of Delta}(2). For claim (iii), write $\Delta_{k_e^{t-1}}^0=[x,y]$. Then $\Delta_{k_e^{t-1}}^t=[x, y-s]$ where $s= \#\{0 \leq i<t-1\ | \ k_{e}^{t-1} \in K^i\}$. Since $e(\Delta_{k_{e}^{t-1}}^{t-1})=e$ and $y \leq e$ by definition, we obtain
\[e= y-s \leq y\leq e,  \]
which implies $s=0$. This completes the verification of the claims for $l=e$.

Suppose $e> r \geq m^{t-1}$ and the Claims are already verified for $l=r+1$. We are going to verify the Claims for $l=r.$  

First, for any $0 \leq i < t-1$, Claim (ii) for $l=r+1$ gives $\Delta_{k_{r+1}^i}^i \subseteq \Delta_{k_{r+1}^{t-1}}^{t-1}$. Combining with Lemma \ref{lem char of Delta}(3), we obtain
\begin{align}\label{eq proof of claim precede}
     b(\Delta_{k_{r}^{t-1}}^{t-1}) < b(\Delta_{k_{r+1}^{t-1}}^{t-1}) \leq b(\Delta_{k_{r+1}^i}^i).
\end{align}
On the other hand, Claim (iii) for $l=r+1$ implies $k_{r+1}^{t-1} \neq k_{r}^j$ for any $0 \leq j <t-1$, and hence 
\begin{align}\label{eq proof of claim chain of eq}
    \Delta_{k_{r}^{t-1}}^0= \Delta_{k_{r}^{t-1}}^1= \cdots = \Delta_{k_{r}^{t-1}}^i = \cdots =\Delta_{k_{r}^{t-1}}^{t-1}.
\end{align}
 As a consequence, the set 
\[ \{ j \in J^i\ | \ e(\Delta_j^i)=r,\ b(\Delta_j^i)< b(\Delta_{k_{r+1}^i}^i) \}\]
is non-empty since it contains $k_{r}^{t-1}$. We conclude that $m^i\leq r$ by Lemma \ref{lem char of Delta}(4). This proves Claim (i) for $l=r$. 

Next, for any $0 \leq i \leq t-1$, \eqref{eq proof of claim precede} and \eqref{eq proof of claim chain of eq} give
\[ e(\Delta_{k_{r}^{t-1}}^i)=e(\Delta_{k_{r}^{t-1}}^{t-1})=r,\ b(\Delta_{k_{r}^{t-1}}^i)=b(\Delta_{k_{r}^{t-1}}^{t-1}) < b(\Delta_{k_{r+1}^i}^i). \]
Therefore, $k_{r}^{t-1}$ is in the set
\[ \{ j \in J^i\ |\ e(\Delta_{j}^i)=r,\ b(\Delta_{j}^{i}) < b(\Delta_{k_{r+1}^i}^i) \}.\]
We conclude that $ b(\Delta_{k_{r}^{t-1}}^i) \leq b(\Delta_{k_{r}^i}^i)$ by Lemma \ref{lem char of Delta}(3). This verifies Claim (ii) for $l=r$. 

Finally for Claim (iii), suppose the contrary that $k_{r}^{t-1} \in K^i$ for some $0 \leq i <t$ and take maximal such $i$. We must have 
\[\Delta_{k_r^{t-1}}^{t-1}=\Delta_{k_r^{t-1}}^{t-2}= \cdots=\Delta_{k_r^{t-1}}^{i+1}=(\Delta_{k_r^{t-1}}^{i})^{-}, \]
and hence $k_{r}^{t-1}= k_{r+1}^i$. However, Claim (ii) for $l=r+1$ gives
\[ \Delta_{k_{r}^{t-1}}^{t-1}= \Delta_{k_{r+1}^i}^{i+1}= (\Delta_{k_{r+1}^i}^{i})^{-}\subsetneq \Delta_{k_{r+1}^{i}}^{i} \subseteq   \Delta_{k_{r+1}^{t-1}}^{t-1}.\]
In particular, $b(\Delta_{k_{r}^{t-1}}^{t-1} ) \geq b(\Delta_{k_{r+1}^{t-1}}^{t-1})$, which contradicts to Lemma \ref{lem char of Delta}(3). This completes the verification of Claim (iii) for $l=r$ and the proof of the lemma. 
\end{proof}

As a corollary, we can improve the statement of Lemma \ref{lem char of Delta} when $0 \leq i \leq t-1$ as follows.

\begin{cor}\label{cor char of Delta}
Let $\{K^i\}_{i=0}^{t(\beta)-1}$ be a collection of mutually disjoint subsets of $J(\beta)$. The followings are equivalent.
\begin{enumerate}
    \item [(a)] $K^i$ can be labeled as $K^i=\{k_{l}^i\}_{m^i \leq l \leq e(\beta)}$ such that the followings hold.
    \begin{enumerate}
\item [(1)] $e(\Delta_{k_l^i})=l$.
    \item [(2)]$  b(\Delta_{k_{e}^i})= \max\{ b(\Delta_j) \ | \ j \in J\setminus (\sqcup_{0 \leq r <i} K^r ),\  e(\Delta_j)=e    \}$.
        \item [(3)] $  b( \Delta_{k_l^i})= \max\{ b(\Delta_j) \ | \ j \in J\setminus (\sqcup_{0 \leq r <i} K^r),\  e(\Delta_j)=l,\ b(\Delta_j)< b(\Delta_{k_{l+1}^i})    \}$
        \item [(4)] $ \{ j \in J\setminus (\sqcup_{0 \leq r <i} K^r) \ | \ e(\Delta_j)=m^i-1,\ b(\Delta_j)< b(\Delta_{k_{m^i}^i}) \}= \emptyset.$
\end{enumerate}
\item [(b)] The multi-segment $\beta^{i}$ can be obtained from $\beta^{i-1}$ by replacing $\{\Delta_{k^i}\}_{k^i \in K^i}$ with $\{\Delta_{k^i}^{-}\}_{k^i \in K^i}$.
\end{enumerate}
\end{cor}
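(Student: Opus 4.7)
The plan is to deduce the corollary from Lemma \ref{lem char of Delta} by a change of variables, using Lemma \ref{lem M-W index} to identify the step-$i$ data $\Delta_j^i$ indexed by $j \in J^i$ with the original data $\Delta_j$ indexed by $j \in J \setminus (\sqcup_{0 \leq r < i} K^r)$. I would proceed by induction on $i$: assuming $K^0, \dots, K^{i-1}$ have already been shown to correspond to the algorithm's choices (so that $\beta^0, \dots, \beta^i$ are the iterates produced by the M{\oe}glin-Waldspurger algorithm), the task reduces to showing that conditions (1)--(4) of the corollary on $K^i$ are equivalent to conditions (1)--(4) of Lemma \ref{lem char of Delta} applied to $\beta^i$; the lemma itself then supplies the equivalence with (b) for step $i$.

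By Lemma \ref{lem M-W index}(c)(d), for every $j \in J \setminus (\sqcup_{0 \leq r < i} K^r)$ one has $\Delta_j^i = \Delta_j$, so such an index automatically lies in $J^i$ with unchanged base and end values. The remaining members of $J^i$ are ``leftovers'': indices $j \in \sqcup_{0 \leq r < i} K^r$ with $\Delta_j^i = \Delta_j^{-} \neq \emptyset$. Writing such a leftover as $j = k_{l+1}^r$ for some $r < i$, one has $e(\Delta_j^i) = l$ while $b(\Delta_j^i) = b(\Delta_j) = b(\Delta_{k_{l+1}^r})$. I would then verify that leftovers never affect any of the extremum conditions: the identity $e^i = e(\beta)$ holds because $i < t(\beta)$ leaves at least one genuine candidate with original end value $e(\beta)$; for condition (3) at level $l$, the only leftover candidates $j = k_{l+1}^r$ satisfy $b(\Delta_{k_{l+1}^r}) \geq b(\Delta_{k_{l+1}^i})$ by Lemma \ref{lem M-W index}(b), which directly violates the strict inequality required; the same argument at level $l = m^i - 1$ (using Lemma \ref{lem M-W index}(a) to guarantee $k_{m^i}^r$ is defined for all $r < i$) handles condition (4). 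Consequently, restricting the index range from $J^i$ to $J \setminus (\sqcup_{0 \leq r < i} K^r)$ leaves the suprema and the emptiness assertion unaltered, and the conditions of Lemma \ref{lem char of Delta} become verbatim those of the corollary.

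The main obstacle is precisely this bookkeeping --- confirming that the leftover indices in $J^i$ coming from previously processed $K^r$ never beat or tie with genuine candidates in the extremum conditions. Once Lemma \ref{lem M-W index}(a)(b) are available, the required inequalities are automatic, and the corollary follows without further input. The subtlety that the labeling $\{k_l^i\}$ need not be unique when ties among equal segments occur is harmless: conditions (1)--(4) only determine the multi-set of segments $\{\Delta_{k^i}\}_{k^i \in K^i}$, and (b) asks only for the multi-set to match the shortening performed in passing from $\beta^{i-1}$ to $\beta^i$, so any valid labeling on either side suffices.
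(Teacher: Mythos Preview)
Your approach is correct and essentially the same as the paper's: both deduce the corollary from Lemma \ref{lem char of Delta} via Lemma \ref{lem M-W index}, translating between the index set $J^i$ appearing in the lemma and the set $J \setminus (\sqcup_{r<i} K^r)$ appearing in the corollary. The only difference in execution is that where you exclude the leftover indices from the constraint sets using Lemma \ref{lem M-W index}(b), the paper instead invokes only part (c) to observe that the maximizer $k_l^i$ itself lies in $J \setminus (\sqcup_{r<i} K^r)$, so the maximum over the restricted set is already attained there --- a slightly shorter route to the same conclusion.
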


\begin{proof}
    The description of Parts (a) and (b) both determine the collection of segments $\{\Delta_{k^i_l}\}_{k^i \in K^i}$. Thus it suffices to examine the index sets $\{K^i\}_{i=0}^{t-1}$ chosen in the beginning of this section, where Part (b) holds, satisfies all of the conditions in Part (a). 
    
    Condition (1) holds by definition. Conditions (2) holds since $e(\Delta_j^i)=e$ if and only if $\Delta_j^i=\Delta_j$ and $j \not\in \sqcup_{0\leq r <i}K^r$. For Condition (3), observe that $\Delta_{j}^i= \Delta_j$ for $j \in J \setminus \sqcup_{0\leq r <i}K^r$,  and hence Lemma \ref{lem char of Delta}(3) implies
    \[ b( \Delta_{k_l^i})\geq  \max\{ b(\Delta_j) \ | \ j \in J\setminus (\sqcup_{0 \leq r <i} K^r),\  e(\Delta_j)=l,\ b(\Delta_j)< b(\Delta_{k_{l+1}^i})    \}.\]
    On the other hand, Lemma \ref{lem M-W index}(c) shows that $ k_{l}^i\in J\setminus (\sqcup_{0 \leq r <i} K^r)$, so the equality holds. By the same observation, Lemma \ref{lem char of Delta}(4) implies Condition (4). This completes the proof of the corollary.
\end{proof}

\section{\texorpdfstring{Proof of Lemma \ref{lem main intro}}{}}\label{sec proof}

\subsection{Rephrasing Lemma \ref{lem main intro} by multi-segments}\label{sec rephrase}
In this subsection, we rephrase Lemma \ref{lem main intro} using the notation of multi-segments recalled in \S \ref{sec preliminary}.

First, we define multi-segments of \emph{Arthur type}.

\begin{defn}
We say a multi-segment $\alpha$ is of Arthur type if $\alpha= \delta_{\phi}$ for some $L$-parameter $\phi$ of Arthur type. The following is an equivalent but more explicit description. 
\begin{enumerate}
    \item [1.]For an irreducible local Arthur parameter $\rho \otimes \Sym^{d} \otimes \Sym^{a}$, we associate a multi-segment
\[ \delta_{d,a}:=\left\{ \left[\half{a-d}, \half{a+d}\right]_{\rho}, \left[\half{a-d}-1, \half{a+d}-1\right]_{\rho}, \dots,    \left[\half{-a-d}, \half{-a+d}\right]_{\rho}\right\}.\]
\item [2.]For a local Arthur parameter of the form
\begin{align}\label{eq parameter}
    \psi= \bigoplus_{\rho} \bigoplus_{i\in I_{\rho}} \rho \otimes \Sym^{d_i} \otimes \Sym^{a_i}
\end{align}
we associate a multi-segment \[\delta_{\psi}:=\sum_{\rho} \sum_{i \in I_{\rho}} \delta_{d_i,a_i}. \]
\item [3.]We say a multi-segment $\alpha$ is of \emph{Arthur type} if $\alpha= \delta_{\psi}$ for some $\psi$ of the form \eqref{eq parameter}.  
\end{enumerate}
\end{defn}

With this definition, we may rephrase Lemma \ref{lem main intro} as follows.
\begin{lemma}\label{lem main segment}
    Let $\alpha $ be a multi-segment of Arthur type. If $\beta$ is a multi-segment such that $\beta \geq \alpha$ and $\widetilde{\beta} \geq \widetilde{\alpha}$, then $\beta=\alpha$.
\end{lemma}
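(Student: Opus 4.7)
The plan is to argue by induction on the number of segments in $\alpha$. Since both the partial order on multi-segments and the M{\oe}glin-Waldspurger algorithm decompose according to the unitary supercuspidal $\rho$ (see \S\ref{sec partial order} and \S\ref{sec M-W}), I would reduce to the case $\alpha, \beta \in \Mseg_\rho$ for a fixed $\rho$, which I suppress from notation. The base case $\alpha = \emptyset$ is immediate: elementary operations preserve cuspidal support, so $\beta \geq \emptyset$ forces $\beta = \emptyset$. For the inductive step, the strategy is to identify an \emph{extremal} Arthur building block $\delta_{d,a}$ inside $\alpha$, show it embeds as a sub-multi-segment of $\beta$, and peel it off from both sides in order to invoke the inductive hypothesis; this embedding statement is the content of Proposition \ref{prop main} in \S\ref{sec reduction}.

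Writing $\alpha = \sum_{i \in I} \delta_{d_i, a_i}$, I would choose $(d,a) = (d_{i_0}, a_{i_0})$ by taking $i_0$ to maximize $\tfrac{a+d}{2}$ and, among such $i$, to maximize $\tfrac{a-d}{2}$. The top segment $[\tfrac{a-d}{2}, \tfrac{a+d}{2}]$ of the staircase of $\delta_{d,a}$ is then lexicographically maximal on $(\text{end}, \text{base})$ among all segments appearing in $\alpha$, so by Corollary \ref{cor char of Delta} it is extracted first by the MW algorithm applied to $\alpha$. An iterated application of the same corollary should show that the first $a+1$ iterations peel off the full staircase of $\delta_{d,a}$, yielding
\[ \widetilde{\alpha} \;=\; \widetilde{\delta_{d,a}} \,+\, \widetilde{\alpha^{-}}, \qquad \alpha^{-} := \alpha \setminus \delta_{d,a}. \]
Granted Proposition \ref{prop main}, namely that $\beta$ contains $\delta_{d,a}$ as a sub-multi-segment and that the MW algorithm on $\beta$ likewise begins by peeling it off, one sets $\beta^{-} := \beta \setminus \delta_{d,a}$; the inequality $\beta^{-} \geq \alpha^{-}$ follows by restricting the elementary operations from $\beta$ to $\alpha$ away from the common sub-multi-segment $\delta_{d,a}$, and $\widetilde{\beta^{-}} \geq \widetilde{\alpha^{-}}$ follows from the recursive structure of the MW algorithm combined with $\widetilde{\beta} \geq \widetilde{\alpha}$. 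Since $\alpha^{-}$ is of Arthur type with strictly fewer building blocks, the induction hypothesis yields $\beta^{-} = \alpha^{-}$ and hence $\beta = \alpha$.

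The main obstacle is Proposition \ref{prop main} itself. The difficulty is that $\beta \geq \alpha$ only provides a chain of elementary operations from $\beta$ down to $\alpha$, and these operations can freely mix segments across distinct Arthur building blocks, so a priori there is no reason for $\delta_{d,a}$ to appear as an identifiable sub-multi-segment of $\beta$. My plan to overcome this is to analyze the MW algorithm on $\beta$ directly using Corollary \ref{cor char of Delta}: the extremality of $(d,a)$ ensures that $[\tfrac{a-d}{2}, \tfrac{a+d}{2}]$ is lex-maximal in the common support $\supp(\alpha) = \supp(\beta)$ (forced by $\beta \geq \alpha$), so it must be the first segment extracted from $\beta$ as well. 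The subsequent iterative comparison of $M(\beta^{i})$ with $M(\alpha^{i})$ for $0 \leq i \leq a$ is the delicate step: here the hypothesis $\widetilde{\beta} \geq \widetilde{\alpha}$ enters crucially, and the bookkeeping supplied by Lemma \ref{lem M-W index}, in particular the mutual disjointness of the index sets $K^{i}$, should enable the inductive identification of the remaining $a$ segments of the staircase of $\delta_{d,a}$ inside $\beta$. This careful tracking through the algorithm, rather than any overarching order-theoretic principle, is where the substance of the argument lies.
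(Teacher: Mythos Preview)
Your overall strategy is exactly the paper's: reduce to a single $\rho$, then induct on the number of Arthur blocks using Proposition~\ref{prop main} to peel off the extremal $\delta_{d,a}$ and apply the induction hypothesis to $\alpha^-,\beta^-$.

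Two corrections are in order, however. First, your justification that $\beta^- \geq \alpha^-$ ``follows by restricting the elementary operations \ldots\ away from the common sub-multi-segment $\delta_{d,a}$'' does not work: a chain of elementary operations from $\beta$ down to $\alpha$ can and typically will pass through the segments of $\delta_{d,a}$, so there is nothing to restrict. Fortunately this is harmless for the lemma, since Proposition~\ref{prop main}(ii) already asserts both $\beta^- \geq \alpha^-$ and $\widetilde{\beta^-} \geq \widetilde{\alpha^-}$ as part of its conclusion; you should simply cite it rather than re-argue. (The actual reason $\beta^- \geq \alpha^-$ holds is the numerical characterization of the closure order, under which adding or removing a common summand preserves the inequality.)

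Second, your sketch toward Proposition~\ref{prop main} itself contains a genuine gap: the claim that $[\tfrac{a-d}{2},\tfrac{a+d}{2}]$ is ``the first segment extracted from $\beta$'' presupposes that this segment lies in $\beta$ at all, which is exactly what Part~(i) is meant to establish---$\beta \geq \alpha$ only constrains $\supp(\beta)$, not the segments themselves. The paper's argument is different in shape from what you describe: rather than tracking the staircase across ``$a+1$ iterations'', it locates a \emph{single} step $r$ of the MW algorithm on $\beta$ at which $M(\beta^r) \supseteq [\tfrac{-a+d}{2},\tfrac{a+d}{2}]$ (such a step exists because $\widetilde{\beta} \geq \widetilde{\alpha} \ni [\tfrac{-a+d}{2},\tfrac{a+d}{2}]$), and then uses the support bounds of Lemma~\ref{lem a-d} together with the strict monotonicity of the base values $b(\Delta_{k_l^r})$ to force all the contributing segments $\{\Delta_{k_l^r}\}$ to equal the staircase $\delta_{d,a}$ on the nose. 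The decomposition $\widetilde{\beta} = \widetilde{\beta^-} + \delta_{a,d}$ then comes from Lemma~\ref{lemma M-W}, not from an iterated comparison of $M(\beta^i)$ with $M(\alpha^i)$.
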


\subsection{Preparation for reduction}\label{sec reduction}
The goal of this subsection is to prove Proposition \ref{prop main} below, which is the key of the reduction argument in the proof of Lemma \ref{lem main intro}. The motivation of the formulation of Proposition \ref{prop main} is discussed in the introduction. Note that some of the statement in this subsection is similar to those in \cite{Rid23}, but the proofs are different.




\begin{prop}\label{prop main}
Fix $\rho \in \mathcal{C}_{\textrm{unit}}$. Suppose $\alpha$ is of Arthur type with $\alpha=\delta_{\psi}$, where $\psi$ is of the form
\[ \psi= \bigoplus_{i\in I_{\rho}} \rho \otimes \Sym^{d_i} \otimes \Sym^{a_i}.  \]
Denote
\begin{align*}
    a+d &:= \max\{ a_i+d_i \ | \ i \in I_{\rho}\},\\
    d&:= \min \{ d_i \ | \ i \in I_{\rho},\ a_i+d_i=a+d  \}.
\end{align*}
Then for any multi-segment $\beta$ such that $\beta \geq \alpha$ and $\widetilde{\beta} \geq \widetilde{\alpha}$, the following holds.
\begin{enumerate}
    \item [(i)]The multi-segment $\beta$ must contain a copy of $\delta_{d,a}$.
    \item [(ii)] If we define $\alpha^{-}$ and $\beta^{-}$ by removing a copy of $\delta_{d,a}$ from $\alpha$ and $\beta$ respectively, then $\alpha^{-}= \delta_{\psi^{-}}$ where 
\[ \psi^{-}:= \psi- \rho \otimes \Sym^{d}\otimes \Sym^{a},\]
and we have $ \beta^{-} \geq \alpha^{-}$ and $\widetilde{\beta^{-}} \geq \widetilde{\alpha^{-}}$. 
\end{enumerate}
\end{prop}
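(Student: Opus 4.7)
The plan exploits an extremality property of the pair $(d, a)$. By construction, the segment $\Delta_{\max} := [\tfrac{a-d}{2}, \tfrac{a+d}{2}]_\rho$ achieves the maximum end value $\tfrac{a+d}{2}$ among all segments of $\alpha$ (because $a+d = \max_i(a_i+d_i)$) and, among those, the maximum base value $\tfrac{a-d}{2}$ (because $d = \min\{d_i : a_i+d_i = a+d\}$ minimizes the shift $d_i - d$ in the base $\tfrac{a-d}{2} + (d - d_i)$). By Theorem \ref{thm M-W}, the dual picture holds: $\widetilde{\alpha}$ contains $\delta_{a,d}$, and its segment $[\tfrac{d-a}{2}, \tfrac{a+d}{2}]$ has maximum end value together with the \emph{minimum} base value among such segments of $\widetilde{\alpha}$.

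For part (i), I would establish $\delta_{d,a} \subseteq \beta$ by induction on $j = 0, 1, \ldots, a$, placing each segment $[\tfrac{a-d}{2} - j, \tfrac{a+d}{2} - j]$ inside $\beta$. For $j = 0$: since elementary operations preserve both the maximum end value and the number of segments attaining it (linked segments always have distinct end values), $\beta$ has the same count of maximum-end-value segments as $\alpha$. The hypothesis $\widetilde{\beta} \geq \widetilde{\alpha}$, read through the dual extremality above, then forces the full segment $\Delta_{\max}$---rather than some longer segment overlying it or some refinement of it---to appear in $\beta$. The inductive step reapplies the argument to $\alpha \setminus \{\Delta_{\max}\}$ and $\beta \setminus \{\Delta_{\max}\}$, whose ``next corner'' $[\tfrac{a-d}{2} - 1, \tfrac{a+d}{2} - 1]$ inherits the extremality.

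For part (ii), $\alpha^- = \delta_{\psi^-}$ is immediate from additivity of the $\delta$ construction. The inequalities $\beta^- \geq \alpha^-$ and $\widetilde{\beta^-} \geq \widetilde{\alpha^-}$ would follow from a compatibility argument: once $\delta_{d,a}$ has been identified canonically inside $\beta$ by part (i), and $\delta_{a,d}$ inside $\widetilde{\beta}$ by the parallel argument, the bookkeeping of Lemma \ref{lem M-W index} and Corollary \ref{cor char of Delta} should allow any sequence of elementary operations realizing $\beta \geq \alpha$ (resp.\ $\widetilde{\beta} \geq \widetilde{\alpha}$) to be adapted so as to leave the $\delta_{d,a}$ (resp.\ $\delta_{a,d}$) block untouched, witnessing the reduced inequalities. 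The main obstacle is the base case $j = 0$ of part (i): ruling out configurations in which $\beta$'s maximum-end-value segments are longer than $\Delta_{\max}$ (extending strictly to the left of $\tfrac{a-d}{2}$) requires the joint force of both hypotheses via a dual book-keeping argument, in the spirit of the two-sided extremality analysis from \cite[Theorem 1.16]{HLLZ22} cited in the introduction.
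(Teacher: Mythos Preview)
Your plan captures the right extremality intuition but leaves the central steps unresolved, and the paper's actual route is structurally different from what you propose.

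For Part~(i), the paper does \emph{not} proceed segment by segment. It runs the M{\oe}glin--Waldspurger algorithm on $\beta$: since $\widetilde{\beta}\geq\widetilde{\alpha}\ni[\tfrac{-a+d}{2},\tfrac{a+d}{2}]$, some step $r$ satisfies $M(\beta^r)\supseteq[\tfrac{-a+d}{2},\tfrac{a+d}{2}]$. The strict monotonicity $b(\Delta_{k^r_{m^r}})<\cdots<b(\Delta_{k^r_e})$ from Corollary~\ref{cor char of Delta}(a)(3), combined with the bounds of Lemma~\ref{lem a-d} (namely $b(\Delta_{k^r_{m^r}})\geq\tfrac{-a-d}{2}$ and $b(\Delta_{k^r_e})\leq\tfrac{a-d}{2}$), squeezes the whole chain into an arithmetic progression: every inequality becomes an equality, $m^r=\tfrac{-a+d}{2}$, and $\Delta_{k^r_l}=[l-d,l]$ for each $\tfrac{-a+d}{2}\leq l\leq\tfrac{a+d}{2}$. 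All of $\delta_{d,a}$ drops out at once. Your induction on $j$, by contrast, would require both hypotheses to persist after removing the single segment $\Delta_{\max}$; but $\alpha\setminus\{\Delta_{\max}\}$ is no longer of Arthur type, and the dual inequality for the truncated pair is unavailable without already having Part~(ii). The ``main obstacle'' you flag---ruling out longer max-end segments in $\beta$---is exactly what the squeeze argument handles, and it handles it for every $l$ simultaneously rather than only for $l=e$.

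For Part~(ii), the paper does not adapt sequences of elementary operations. It proves a standalone computation, Lemma~\ref{lemma M-W}: whenever $\beta$ contains $\delta_{b,e,s}$ with $e$ the maximal end value and $b-s$ the minimal base value occurring in $\beta$, one has $\widetilde{\beta}=\widetilde{\beta^-}+\widetilde{\delta}_{b,e,s}$. With $(b,e,s)=(\tfrac{a-d}{2},\tfrac{a+d}{2},a)$ the hypotheses are supplied by Lemma~\ref{lem a-d}, so $\widetilde{\beta}=\widetilde{\beta^-}+\delta_{a,d}$ and likewise $\widetilde{\alpha}=\widetilde{\alpha^-}+\delta_{a,d}$; cancelling the common summand gives $\widetilde{\beta^-}\geq\widetilde{\alpha^-}$ immediately. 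Your proposal to rearrange elementary operations on $\widetilde{\beta}$ so as to fix a $\delta_{a,d}$ block presupposes knowing how $\widetilde{\beta^-}$ sits inside $\widetilde{\beta}$, which is precisely the content of Lemma~\ref{lemma M-W} and is where essentially all of the work in the paper's \S\ref{sec reduction} lies.
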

For simplicity, we shall omit the subscript $\rho$ of $[b,e]_{\rho}$ in the rest of this subsection.

We first show that the choice of $a+d$ and $d$ in Proposition \ref{prop main} gives the following bounds. The definition of $\beta^i$ and other notation in the proof can be found in \S \ref{sec lemma MW}.

\begin{lemma}\label{lem a-d}
Under the setting of Proposition \ref{prop main}, for any $i \geq 0$ and $[x,y] \in \beta^i$, the following holds.
\begin{enumerate}
    \item [(a)] $x \geq \half{-a-d}$.
    \item [(b)] $y \leq \half{a+d}$. If the equality holds, then $[b,e]\in \beta^l$ for any $0 \leq l \leq i$ and $x \leq \half{a-d}$.
\end{enumerate}
\end{lemma}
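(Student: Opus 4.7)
The plan is to establish parts (a), the bound $y \leq \frac{a+d}{2}$ of part (b), and the equality case of (b) in turn. For part (a) and the bound in (b), I would induct on $i \geq 0$. The base case $i = 0$ combines the equality $\supp(\beta) = \supp(\alpha)$ (forced by $\beta \geq \alpha$) with the observation that $\supp(\alpha) = \supp\bigl(\sum_{j \in I_{\rho}} \delta_{d_j, a_j}\bigr)$ is contained in $\{\rho|\cdot|^{k} : \frac{-a-d}{2} \leq k \leq \frac{a+d}{2}\}$, by the maximality of $a+d$ among the $a_j + d_j$. The inductive step is immediate, since passing from $\beta^{l-1}$ to $\beta^{l}$ only replaces certain segments $\Delta$ by $\Delta^{-}$, an operation that leaves base values untouched and either decreases end values by one or removes the segment entirely.

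For the first substatement of the equality case, suppose $\Delta_{j}^{i} = [x, y]$ with $y = \frac{a+d}{2}$. Using the bookkeeping from Section \ref{sec lemma MW}, one has $e(\Delta_{j}^{i}) = e(\Delta_{j}) - \#\{0 \leq r < i : j \in K^{r}\}$. Combined with $e(\Delta_{j}) \leq \frac{a+d}{2}$ (the first half of (b)) and $e(\Delta_{j}^{i}) = \frac{a+d}{2}$, this forces $j \notin K^{r}$ for every $r < i$, and so $\Delta_{j}^{l} = \Delta_{j} = [x, y]$ for all $0 \leq l \leq i$. The main technical content is the remaining bound $x \leq \frac{a-d}{2}$. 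My plan is to exploit the hypothesis $\widetilde{\beta} \geq \widetilde{\alpha}$ by first analyzing $\widetilde{\alpha}$: applying the M{\oe}glin--Waldspurger algorithm to $\alpha$ and using the Arthur-type structure together with the minimality of $d$ among the $d_j$ with $a_j + d_j = a+d$, I expect every segment of $\widetilde{\alpha}$ ending at $\frac{a+d}{2}$ to have base at least $\frac{d-a}{2}$. Via Zelevinsky's characterization of the partial order by the multiplicity function $N_{\bullet}(u,v) := \#\{[b,e] \in \bullet : b \leq u,\ e \geq v\}$ (where elementary operations weakly increase $N$), the hypothesis $\widetilde{\beta} \geq \widetilde{\alpha}$ transports this lower bound to $\widetilde{\beta}$: for every $0 \leq l < t(\beta)$ we get $m^{l}(\beta) \geq \frac{d-a}{2}$, so $M(\beta^{l}) = [m^{l}, \frac{a+d}{2}]$ has length at most $a + 1$.

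Finally, combining this length bound with the strictly decreasing chain of base values among the segments indexed by $K^{l}$ (Corollary \ref{cor char of Delta}(3)) and with part (a) applied to the bottom segment of that chain, one deduces $x \leq \frac{a-d}{2}$ for the top segment $[x, \frac{a+d}{2}]$ selected at each relevant step of the M{\oe}glin--Waldspurger algorithm on $\beta$, which is exactly the desired bound. The most delicate step is this last translation from the length bound on $M(\beta^{l})$ to the base bound on the individual top segments of $\beta$: the picked bases satisfy only a strictly-decreasing-chain condition rather than a direct functional relationship with $m^{l}$, so the conclusion requires careful use of the preceding conditions in the algorithm together with the support bounds of part (a) to handle both the case when $[x, y]$ is itself the top segment $\Delta_{k_{e}^{l}}^{l}$ and the case when it is still in $\beta^{l}$ but scheduled to be picked later.
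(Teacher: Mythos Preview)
Your treatment of part (a), the inequality $y \leq \frac{a+d}{2}$, and the first sub-claim of the equality case is correct and essentially identical to the paper's argument.

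The gap is in the final claim $x \leq \frac{a-d}{2}$. Your three ingredients --- the bound $m^{l} \geq \frac{d-a}{2}$ on the base of $M(\beta^{l})$, the strictly decreasing chain $b(\Delta_{k_{m^{l}}^{l}}) < \cdots < b(\Delta_{k_{e}^{l}}) = x$, and part (a) applied to the bottom segment --- combine to give only a \emph{lower} bound on $x$: one gets $x \geq b(\Delta_{k_{m^{l}}^{l}}) + (e - m^{l}) \geq \frac{-a-d}{2} + (e - m^{l})$, and the length bound $e - m^{l} \leq a$ just caps how large this lower bound can be. Nothing here bounds $x$ from above. Concretely, nothing in your ingredients excludes the possibility that $\beta$ contains the singleton $[\frac{a+d}{2}, \frac{a+d}{2}]$ as its unique segment of maximal end value (so $t(\beta)=1$, $m^{0}=e$, the chain has length one, and all your hypotheses are satisfied with $x = \frac{a+d}{2}$). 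You yourself flag this step as ``the most delicate'' but do not actually supply the mechanism that would turn a length bound on $M(\beta^{l})$ into a base bound on $\Delta_{k_{e}^{l}}$; there is no such mechanism using only the pieces you list. In fact, the implication you are after is essentially the argument of Proposition \ref{prop main} run in reverse: there the chain and part (a) are combined \emph{with} the bound $x \leq \frac{a-d}{2}$ from the present lemma to force equalities, not to derive that bound.

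The paper's proof of this last claim is both different and much shorter, and uses only the hypothesis $\beta \geq \alpha$ (not $\widetilde{\beta} \geq \widetilde{\alpha}$). One checks directly that for $\alpha$ itself every segment ending at $\frac{a+d}{2}$ has base at most $\frac{a-d}{2}$, by the minimality of $d$. Then one observes that this property is preserved under each elementary operation: if $\Delta_{1} \cup \Delta_{2}$ ends at $\frac{a+d}{2}$ then one of $\Delta_{1}, \Delta_{2}$ already did, and the union's base is the smaller of the two bases; the intersection ends strictly below $\frac{a+d}{2}$. Since $\beta$ is obtained from $\alpha$ by a sequence of such operations, the property passes to $\beta$.
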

\begin{proof}
For arbitrary multi-segments $\alpha, \beta$, we have the following observations.
\begin{enumerate}
    \item [$\oldbullet$] If $\beta\geq \alpha$, then $\supp(\beta)=\supp(\alpha)$.
    \item [$\oldbullet$] For $i \geq 0$, $\supp(\beta^i) = \supp(M(\beta^i))+ \supp(\beta^{i+1})$. In particular, we have  $\supp(\beta) \supseteq \supp(\beta^i)$.
\end{enumerate}
Now we return to the setting of Proposition \ref{prop main}.
The choice of $a+d$ gives
\begin{align}
  \label{eq a+d}  \half{a+d}&= \max\{ x \in \R \ | \ \rho|\cdot|^{x} \in \supp(\alpha)=\supp(\beta) \}, \\
  \nonumber  \half{-a-d}&= \min\{ x \in \R \ | \ \rho|\cdot|^{x} \in \supp(\alpha)=\supp(\beta) \},
\end{align}
which shows Part (a) and the first part of Part (b) by the observations above. 

Next, we show the second part of Part (b) with the notation developed in \S \ref{sec lemma MW} for $\beta$. Take a $j \in J^i$ such that  $\Delta_{j}^i=[x,y]$. We have 
\[ \Delta_{j}^0 \supseteq \Delta_{j}^1 \supseteq \cdots \supseteq \Delta_j^i=[x,y].\]
Moreover, if any of the inclusion is strict, then $e(\Delta_j^0)>y$. If $y=\half{a+d}$, this can not happen by \eqref{eq a+d}. Therefore, we conclude that $[x,y] =\Delta_{j}^l \in\beta^l $ for any $0 \leq l \leq i$.


Finally, we verify the property that if $[x,\half{a+d}] \in \beta$ then $x \leq \half{a-d}$. If $\beta=\alpha$, then 
\[ \left[x, \half{a+d} \right] \in \left\{ \left[\half{a+d}-d_i, \half{a+d} \right]  \ | \ i \in I_{\rho}, \ a_i+d_i= a+d \right\},  \]
and hence $ x \leq \half{a-d}$ by the definition of $d$. In general, $\beta$ is obtained from $\alpha$ by performing a sequence of elementary operations. By \eqref{eq a+d}, the desired property is preserved under each elementary operation, and hence $\beta$ also has this property. This completes the proof of the lemma.
\end{proof}

For Part (ii) in Proposition \ref{prop main}, we need the following computation, which works for a general multi-segment $\beta$. We obtain the same conclusion as \cite[Lemma 3.18]{Rid23} but with slightly weaker conditions.

\begin{lemma}\label{lemma M-W}
Suppose a multi-segment $\beta$ contains a copy of
\[ \delta_{b,e,s}:=\{ [b,e],[b-1,e-1],\dots, [b-s,e-s]   \}, \]
and any segment $[x,y]\in \beta$ satisfies the following assumptions.
\begin{enumerate}
    \item [(i)] $x \geq b-s$.
    \item [(ii)]  $ y \leq e$. 
\end{enumerate}
Define $\beta^{-}$ by removing a copy of $\delta_{b,e,s}$ from $\beta$. Then we have 
\[ \widetilde{\beta}= \widetilde{\beta^{-}}+ \widetilde{\delta}_{b,e,s}, \]
where
\[ \widetilde{\delta}_{b,e,s}= \{ [e-s,e], [e-s-1,e-1],\dots, [b-s,b] \}. \]
\end{lemma}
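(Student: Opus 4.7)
The plan is to run the M{\oe}glin--Waldspurger algorithm on $\beta$ and analyze how its output decomposes between contributions coming from the copy of $\delta_{b,e,s}$ and from $\beta^-$. By induction on the total length of $\beta$, using Corollary~\ref{cor char of Delta} to characterize the picks at each iteration, I aim to establish that the multi-set $\{M(\beta^i)\}_{i}$ decomposes as $\{M((\delta_{b,e,s})^{i_A})\}_{i_A} \sqcup \{M((\beta^-)^{i_B})\}_{i_B}$, matching the outputs of M-W applied separately to $\delta_{b,e,s}$ and to $\beta^-$.

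At the first iteration of M-W on $\beta$, hypothesis~(ii) forces $e^0 = e$ and hypothesis~(i) forces $m^0 \geq b-s$. Using Corollary~\ref{cor char of Delta} to describe the pick chain, I would show that $M(\beta^0)$ equals either $M((\delta_{b,e,s})^0) = [e-s, e]$ or $M((\beta^-)^0)$; call these the \emph{D-type} and \emph{B-type} cases, respectively. The key input is the ``staircase'' structure of $\delta_{b,e,s}$: the segments $[b-j, e-j]$ for $0 \leq j \leq s$ have bases $b, b-1, \ldots, b-s$, which by hypothesis~(i) are the smallest bases available at those end values, ensuring the pick chain on $\beta$ can always reach as far down as the pick chain of either pure iteration. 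The classification is then forced by comparing maxima of ends and bases available in $(\delta_{b,e,s})^0$ versus $(\beta^-)^0$.

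The main obstacle is that the actual picks in a single iteration on $\beta$ may genuinely mix segments from $\delta_{b,e,s}$ and $\beta^-$. For instance, for $\beta = \{[0,1], [-1,0], [1,1]\}$, Corollary~\ref{cor char of Delta} forces the first iteration's picks to be $\{[1,1], [-1,0]\}$, drawing from both parts; consequently $\beta^1 = \{[0,1], [-1,-1]\}$ does not literally decompose as (shortened $\delta_{b,e,s}$) + (unchanged $\beta^-$), blocking a naive induction. The resolution is a \emph{net-accounting} argument: even after such a mixed iteration, a careful bookkeeping of which ends and bases are consumed, leveraging Lemma~\ref{lem M-W index} and the bounds provided by~(i) and~(ii), shows that the ``deficit'' incurred on one side is recovered at the very next iteration, yielding a global bijection between iterations of M-W on $\beta$ and the combined iterations of M-W on $\delta_{b,e,s}$ and on $\beta^-$. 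Summing the matched contributions then yields $\widetilde{\beta} = \widetilde{\beta^-} + \widetilde{\delta}_{b,e,s}$, completing the induction.
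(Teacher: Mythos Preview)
Your dichotomy claim that $M(\beta^0)$ equals either $[e-s,e]$ or $M((\beta^-)^0)$ is actually correct (one can show $m^0 \leq e-s$ always, since the staircase in $\delta_{b,e,s}$ supplies a valid pick at each end $e,e-1,\ldots,e-s$; and if $m^0 < e-s$ then a base-counting forces every pick to lie in $\beta^-$). But this is not where the argument lives. Your own example already exposes the gap: after one iteration the multi-segment $\beta^1$ need not decompose as (a shifted $\delta$) $+$ (an iterate of $\beta^-$), so neither an induction on $|\supp(\beta)|$ nor a step-by-step D/B classification can proceed without a precise mechanism for the ``deficit recovery'' you allude to. The phrase ``net-accounting argument'' is exactly the content of the lemma; as stated it is a promise, not a proof, and I do not see how to realize it iteration-by-iteration because the hypotheses (i),(ii) are not inherited by $\beta^1$ relative to any natural candidate staircase.

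The paper's proof resolves this by abandoning the single-step viewpoint entirely. It inducts on $\ell=e-b+1$ rather than on the size of $\beta$, and processes the first $t=t(\beta)$ iterations of M--W \emph{as a block}. The point of Lemma~\ref{lem M-W index} is that the pick-index sets $K^0,\ldots,K^{t-1}$ are pairwise disjoint, so their union $\bigsqcup K^i$ is an honest subset of $J$; one then shows this union contains one index $k_l^{r_l}$ with $\Delta_{k_l^{r_l}}=[l-e+b,\,l]$ for each $e-s\le l\le e$, with $r_{e-s}\le\cdots\le r_e$. Removing precisely these indices and \emph{explicitly redefining} pick sets $(K^i)^-$ for $\beta^-$ (by splicing the tail of $K^{i+1}$ onto the head of $K^i$ across the threshold $r_l$) is the substantive step: one must verify Conditions (1)--(4) of Corollary~\ref{cor char of Delta}(a) case by case. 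This yields $\beta^t=(\beta^-)^{t-1}+\delta_{b,e-1,s}$ and $\{M(\beta^i)\}_{i=0}^{t-1}=\{M((\beta^-)^i)\}_{i=0}^{t-2}\sqcup\{[e-s,e]\}$, after which the induction on $\ell$ closes. That explicit reindexing is exactly the content your ``bookkeeping'' would need to supply.
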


We give an example to illustrate the reduction process of the proof.
\begin{exmp}\label{exmp lem}
   Let $\beta= \sum_{i=0}^4\{\Delta_{k^i}\}_{k^i \in K^i}$, where
   \begin{align}\label{eq seg exmp}
   \begin{split}
       \{\Delta_{k^0}\}_{k^0 \in K^0}&= \{[2,2],[\ \ 1,1],[\ \ 0, 0],[-1,-1],[-2,-2]\},\\
       \{\Delta_{k^1}\}_{k^1 \in K^1}&= \{[1,2],[-1,1],\bf{[-2,0]},\bf{[-3,-1]}\},\\
       \{\Delta_{k^2}\}_{k^2 \in K^2}&= \{[0,2],{\bf{[-1,1]}},[-3,0]\},\\
       \{\Delta_{k^3}\}_{k^3 \in K^3}&= \{{\bf{[0,2]}},[-3,1]\},\\
       \{\Delta_{k^4}\}_{k^4 \in K^4}&= \{[-2,2],\}.
   \end{split}
   \end{align}
   Note that $\beta$ contains $\delta_{0,2,3}=\{ [0,2],[-1,1],[-2,0],[-3,-1]\},$ and the pair $(\beta, \delta_{0,2,3} )$ satisfies Assumptions (i), (ii) in the lemma. Also, the subsets $\{K^i\}_{i=0}^4$ of $J(\beta)= \sqcup_{i=0}^4 K^i$ satisfy all of the conditions in Corollary \ref{cor char of Delta}(a), where we let $k_l^i$ denote to the index in $K^i$ such that $e(\Delta_{k_l^i})=l$. Therefore, $\beta^{4}=\sum_{i=0}^4\{\Delta_{k^i}^-\}_{k^i \in K^i}$, and
   \begin{align*}
       \widetilde{\beta}&= \{M(\beta^i)\}_{i=0}^4+ \widetilde{\beta^{4}}\\
       &= \{ [-2,2],[-1,2],[0,2],[1,2],[2,2] \}+ \widetilde{\beta^{4}}.
   \end{align*}
   
   For $-1=e-s \leq l \leq e= 2$, we let $r_l:=\max \{ 0 \leq r\leq 4 \ | \ \Delta_{k_l^r}= [e-s-l,e-l]  \}$. Thus $(r_2,r_1,r_0,r_{-1})=(3,2,1,1)$. The corresponding segments $\Delta_{k_{l}^{r_l}}$ are displayed in bold text in \eqref{eq seg exmp}.   

   We may construct the segment $\beta^{-}$ from $\beta$ by removing $\{\Delta_{k_{l}^{r_l}}\}_{-1 \leq l\leq 2}$. Let $J^{-}:= J\setminus \{k_l^{r_l}\}_{-1 \leq l \leq 2}$, which we identify with $J(\beta^{-})$. We define the mutually disjoint subsets $\{(K^i)^{-}\}_{i=0}^3$ of $J^-$ by removing $\{\Delta_{k_{l}^{r_l}}\}_{-1 \leq l \leq 2}$ in \eqref{eq seg exmp} and then ``push" the segments above them upward accordingly. That is, we define
    \begin{align*}
       \{\Delta_{(k^0)^{-}}\}_{(k^0)^{-} \in (K^0)^{-}}&= \{[2,2],[\ \ 1,1],[\ \ 0, 0],[-1,-1],[-2,-2]\},\\
       \{\Delta_{(k^1)^{-}}\}_{(k^1)^{-} \in (K^1)^{-}}&= \{[1,2],[-1,1],[-3,0]\},\\
       \{\Delta_{(k^2)^{-}}\}_{(k^2)^{-} \in (K^2)^{-}}&= \{[0,2],[-3,1]\},\\
       \{\Delta_{(k^3)^{-}}\}_{(k^3)^{-} \in (K^3)^{-}}&= \{[-2,2]\}.
   \end{align*}
   It follows that the mutually disjoint subsets $\{(K^i)^-\}_{i=0}^3$ of $J^-$ satisfies all of the conditions in Corollary \ref{cor char of Delta}(a). Therefore, $(\beta^{-})^3=\sum_{i=0}^3\{\Delta_{(k^i)^-}^{-}\}_{(k^i)^- \in (K^i)^-}$, and 
    \begin{align*}
       \widetilde{\beta^-}&= \{M((\beta^-)^i)\}_{i=0}^3+ \widetilde{(\beta^{-})^3}\\
       &= \{ [-2,2],[0,2],[1,2],[2,2] \}+ \widetilde{(\beta^{-})^3}.
   \end{align*}
   Now observe that $\beta^4$ contains a copy of $\{\Delta_{k_l^{r_l}}^-\}_{-1 \leq l \leq 3}=\delta_{0,1,3}$ and the pair $(\beta^4, \delta_{0,1,3}) $ satisfies Assumptions (i), (ii) in the lemma. Moreover, $(\beta^4)^{-}$, which is obtained from $\beta^4$ by removing a copy of $\delta_{0,1,3}$, is exactly $(\beta^-)^3$, and $\{M(\beta^i)\}_{i=0}^4= \{M((\beta^-)^i)\}_{i=0}^3 \sqcup \{[2,-1]\}$. We conclude that the lemma holds for the pair $(\beta, \delta_{0,2,3})$ if it holds for the pair $(\beta^{4}, \delta_{0,1,3})$.
\end{exmp}

Now we apply the reduction process in above specific example to prove the lemma for general cases.

\begin{proof}[proof of Lemma \ref{lemma M-W}]
We use the notation developed in \S \ref{sec lemma MW} for $\beta$, which we recall briefly as follows.

\begin{enumerate}
    \item [$\oldbullet$]$\beta= \{\Delta_{j}\}_{j \in J(\beta)}$.
    \item [$\oldbullet$]$e(\beta):= \max\{e(\Delta_{j})\}_{j \in J(\beta)}$.
    \item [$\oldbullet$]$t(\beta):=\#\{j \in J(\beta)\ | \ e(\Delta_j)=e(\beta)\}.$
   \item [$\oldbullet$] $\{K^i(\beta)\}_{i=0}^{t-1}$ is a collection of (mutually disjoint) subsets of $J(\beta)$ satisfying all conditions in Part (a) of Corollary \ref{cor char of Delta}.  
\end{enumerate}
Note that Assumption (ii) implies that the number $e$ matches $e(\beta)$. We shall simply write $J=J(\beta)$, $t=t(\beta)$, $K^i=K^i(\beta)$ in the following discussion.
    
     We apply induction on $\ell:=e-b+1$ to prove the lemma. When $\ell=1$, it follows directly from Algorithm \ref{algo M-W} that $M(\beta)= \widetilde{\delta}_{b,e,s}$, $\beta^1=\beta^-$, and hence
     \begin{align*}
         \widetilde{\beta}&= M(\beta)+ \widetilde{\beta^1}\\
         &= \widetilde{\beta^{-}}+ \widetilde{\delta}_{b,e,s}.
     \end{align*}
    From now on, we assume $\ell>1$ and the conclusions are already established for $e-b+1=\ell-1$.

    First, we give the following observations.
    \begin{enumerate}
        \item [$\oldbullet$] $\Delta_{k_e^i}=[b,e]$ for some $0 \leq i \leq t-1$.
        \item [$\oldbullet$] 
       If $0 \leq r<s$, $\Delta_{k_{e-r}^i}=[b-r,e-r]$, and $\beta^i$ contains a $[b-r-1,e-r-1]$, then $\Delta_{k_{e-r-1}^i}=[b-r-1,e-r-1]$.
    \end{enumerate}
    As a consequence, $\bigsqcup_{i=0}^{t-1} \{\Delta_{k^i}\}_{k^i \in K^i}$ must contain a copy of $\delta_{b,e,s}$. Thus for each $e-s \leq l \leq e$, the set
    \[\{ 0 \leq r \leq t-1 \ | \ \Delta_{k_l^r}=[l-e+b,l]   \}\]
    is non-empty, and we define $r_l$ to be the maximum of this set.
    
    Next, we show that
      \begin{align}\label{eq r increasing}
          r_{e-s} \leq r_{e-s+1} \leq \cdots \leq r_{e}.
      \end{align}
    Indeed, for $e-s < l \leq e$, Parts (b) and (d) of
    Lemma \ref{lem M-W index}  and the definition of $r_l$ imply that for any $ r_l <i \leq t-1$,
    \[ [l-e+b,l]= \Delta_{k_{l}^{r_l}} \subsetneq \Delta_{k_{l}^{i}}.\]
    Thus Corollary \ref{cor char of Delta}(a)(3) shows that if $l-1 \geq m^i$, then 
    \[b(\Delta_{k_{l-1}^{i}}) < b(\Delta_{k_{l}^{i}}) \leq l-e+b-1,\]
    and hence $\Delta_{k_{l-1}^{i}} \neq [(l-1)-e+b,l-1]$. This shows that $r_{l-1} \leq r_{l}$, and hence \eqref{eq r increasing} holds.

     Now set $J^{-}:= J \setminus \{ k^{r_l}_l\}_{l=e-s}^e$. We have $\beta^{-}=\{\Delta_{j}\}_{j \in J^{-}}$. For $0 \leq i \leq t-2$, define mutually disjoint subsets $(K^i)^{-}$ of $J^{-}$ as follows.
       \begin{align*}
        (K^i)^{-}:=\begin{cases}
            K^i & \textrm{ if }i<r_{e-s},\\
            \{ k_{e}^i,\dots, k_{l+1}^i\} \sqcup \{k_l^{i+1},\dots, k_{m^{i+1}}^{i+1}\} & \textrm{ if } r_{l} \leq  i< r_{l+1}\\
            K^{i+1} & \textrm{ if } i\geq r_{e}. 
        \end{cases}
    \end{align*}
    We claim that the collection of index sets $\{ (K^i)^-\}_{i=0}^{t-2}$ satisfies all of the conditions in Corollary \ref{cor char of Delta}(a), and hence we can compute $\widetilde{\beta^-}$ using these index sets as described in Part (b) of the same corollary. 
    
   Before verifying the claim, we demonstrate that the claim implies the desired conclusion. Define 
         \begin{align*}
         K^-:=\bigsqcup_{i=0}^{t-2}(K^i)^{-},\ K:=\bigsqcup_{i=0}^{t-1}K^i.
     \end{align*}
    Observe that  $ b(\Delta_{k_{e-s}^{r_{e-s}}})= b-s$, and hence by Assumption (i), $M(\beta^{r_{e-s}})=[ e-s, e]$. This implies
    \begin{align}\label{eq of K}
        K^- \sqcup \{ k^{r_l}_l\}_{l=e-s}^e=K
    \end{align}
    As a consequence of above equality, 
    \begin{align*}
        \beta^{t}= (\beta^{-})^{t-1} \sqcup \delta_{b,e-1,s},
    \end{align*}
   and hence $\beta^{t}$ contains a copy of $\delta_{b,e-1,s}$. Also, $\beta^t$
    satisfies Assumptions (i), (ii) with $e$ replaced by $e-1$ according to its construction.  Thus the induction hypothesis gives
    \begin{align}\label{eq beta^t}
        \widetilde{\beta^t}= \widetilde{(\beta^-)^{t-1}} \sqcup \widetilde{\delta}_{b,e-1,s}.
    \end{align}
        Another consequence of the equality \eqref{eq of K} is that
     \begin{align}\label{eq of M}
         \{ M((\beta^{-})^{i})\}_{i=0}^{t-2} \sqcup \{ [e-s,e]\} = \{ M(\beta^{i})\}_{i=0}^{t-1}.
     \end{align}
Indeed, observe that as multi-sets over $\mathcal{C}$, the following equality holds 
    \[ \sum_{i=0}^{t-1} M(\beta^{i}) = \sum_{k \in K} \{e(\Delta_k)\},  \]
     and the left hand side above uniquely determines the collection $\{M(\beta^i)\}_{i=0}^{t-1}$. The same argument works for $\beta^{-}$. Therefore, \eqref{eq of K} implies \eqref{eq of M}.

     Combining \eqref{eq beta^t} and \eqref{eq of M}, we obtain
     \begin{align*}
         \widetilde{\beta}&=\{M(\beta^{i})\}_{i=0}^{t-1}+ \widetilde{\beta^t}\\
         &= \{ M((\beta^{-})^{i})\}_{i=0}^{t-2} \sqcup \{ [e-s,e]\} + \widetilde{(\beta^-)^{t-1}} \sqcup \widetilde{\delta}_{b,e-1,s}\\
         &= \widetilde{\beta^-} + \widetilde{\delta}_{e,b-1,s}.
     \end{align*}
This gives the desired conclusion of the lemma.

Now we prove the claim. Write $(K^{i})^-= \{ (k_l^{i})^- \}_{(m^i)^{-}\leq l \leq e}$, where $e(\Delta_{(k_l^{i})^-})=l$. In other words,
\[ (m^i)^{-}= \begin{cases}
    m^i & \text{ if }i < r_{e-s},\\
    m^{i+1} & \text{ if }i \geq  r_{e-s},
\end{cases}\  (k_l^i)^{-}= \begin{cases}
    k_l^i & \text{ if }i< r_l,\\
    k_{l}^{i+1} & \text{ if } l\geq (m^{i})^-\text{ and } i\geq  r_l.
 \end{cases} \]
First, Conditions (1) and (2) of Corollary \ref{cor char of Delta}(a) for $\{(K^i)^-\}_{i=0}^{t-2}$ hold by construction.

Next, we verify Condition (3) for each $\Delta_{(k_{l}^i)^-}$, where $0 \leq i \leq t-2$ and $e> l \geq (m^i)^{-}$. We separate into three cases:  $  r_{l+1} \geq r_{l}>i$,  $r_{l+1} > i \geq r_l$, and $i\geq r_{l+1} \geq r_l$. The key observation is the following equality.
\begin{align}\label{eq of K^-}
\begin{split}
    \{j \in J \setminus (\sqcup_{0 \leq r < i} K^r \ | \ e(\Delta_j)=l\}
       =&\{j \in J^{-} \setminus (\sqcup_{0 \leq r < i} (K^r)^{-}) \ | \ e(\Delta_j)=l\} \\
       &\sqcup\{k_l^{\max(i,r_l)}\}.
\end{split}   
\end{align}

\textbf{Case 1.} Suppose $  r_{l+1} \geq r_{l}>i$. Then $(k_{l}^i)^{-}= k_l^{i}$, $(k_{l+1}^i)^{-}= k_{l+1}^{i}$, and $b(\Delta_{k_{l}^i})\geq b(\Delta_{k_{l}^{r_l}})$ by Lemma \ref{lem M-W index}(b) and (d). Thus Corollary \ref{cor char of Delta}(a)(3) for $\Delta_{k_l^i}$ and \eqref{eq of K^-} give
\begin{align*}
    b(\Delta_{(k_{l}^i)^-})&=b(\Delta_{k_{l}^i})\\
    &= \max\{ b(\Delta_j)\ | \ j \in J\setminus (\sqcup_{0 \leq r <i}K^r),\ e(\Delta_j)=l,\ b(\Delta_{j})< b(\Delta_{k_{l+1}^i})\}\\
    &= \max\huge{(}\{ b(\Delta_j)\ | \ j \in J^-\setminus  (\sqcup_{0 \leq r <i}(K^r)^-),\ e(\Delta_j)=l,\ b(\Delta_{j})< b(\Delta_{k_{l+1}^i})\} \\
    &\ \ \ \ \ \ \ \ \ \ \ \ \ \ \sqcup\{ b(\Delta_{k_l^{r_l}})\}\huge{)}\\
     &= \max\{ b(\Delta_j)\ | \ j \in J^-\setminus (\sqcup_{0 \leq r <i}(K^r)^-),\ e(\Delta_j)=l,\ b(\Delta_{j})< b(\Delta_{(k_{l+1}^i)^{-}})\}.
\end{align*}
This verifies Condition (3) in this case.

\textbf{Case 2.} Suppose $r_{l+1}>i\geq r_l$. Then $(k_l^i)^{-}= k_{l}^{i+1}$ and $(k_{l+1}^i)^-= k_{l+1}^i$. 

First, we check that for any $j^{-} \in  J^{-} (\setminus \sqcup_{0 \leq r \leq i-1} (K^r)^{-})$, if $e(\Delta_{j^-})=l$ and $b(\Delta_{j})< b(\Delta_{k_{l+1}^i}),$ then $b(\Delta_{j})< b(\Delta_{k_{l+1}^{i+1}})$. Indeed, take any $j^{-}$ satisfying above assumptions. Corollary \ref{cor char of Delta}(a)(3) for $\Delta_{k_{l}^i}$ and \eqref{eq of K^-} imply 
\begin{align*}
    b(\Delta_{k_{l}^i}) &=\max\{ b(\Delta_j)\ | \   j \in J \setminus (\sqcup_{0 \leq r \leq i-1} K^r), e(\Delta_j)=l, b(\Delta_{j})< b(\Delta_{k_{l+1}^i})\}\\
    &\geq b(\Delta_{j^{-}}).
\end{align*}
Then since $r_{l+1} \geq i+1 >i \geq r_{l}$, Lemma \ref{lem M-W index}(b) and (d) imply
\[ b(\Delta_{j^{-}}) \leq b(\Delta_{k_{l}^{i}})\leq b(\Delta_{k_{l}^{r_l}})=l-e+b<l+1-e+b = b(\Delta_{k_{l+1}^{r_{l+1}}})\leq b(\Delta_{k_{l+1}^{i+1}}).\]
In particular $b(\Delta_{j})< b(\Delta_{k_{l+1}^{i+1}})$ holds. As a consequence, we obtain
\begin{align*}
   & b(\Delta_{(k_l^i)^{-}})\\
    =&b(\Delta_{k_l^{i+1}})\\
    =& \max\{ b(\Delta_j)\ | \   j \in J \setminus (\sqcup_{0 \leq r < i+1} K^r), e(\Delta_j)=l, b(\Delta_{j})< b(\Delta_{k_{l+1}^{i+1}})\}\\
    =& \max\left(\{ b(\Delta_j)\ | \   j \in J \setminus (\sqcup_{0 \leq r < i} K^r), e(\Delta_j)=l, b(\Delta_{j})< b(\Delta_{k_{l+1}^{i+1}})\} \setminus \{ b(\Delta_{k_l^i})\} \right)\\
    =& \max\{ b(\Delta_j)\ | \   j \in J^- \setminus (\sqcup_{0 \leq r < i} (K^r)^-), e(\Delta_j)=l, b(\Delta_{j})< b(\Delta_{k_{l+1}^{i+1}})\}\\
    =& \max\{ b(\Delta_j)\ | \   j \in J^- \setminus (\sqcup_{0 \leq r < i} (K^r)^-), e(\Delta_j)=l, b(\Delta_{j})< b(\Delta_{k_{l+1}^{i}})\}\\
    =&\max\{ b(\Delta_j)\ | \   j \in J^- \setminus (\sqcup_{0 \leq r < i} (K^r)^-), e(\Delta_j)=l, b(\Delta_{j})< b(\Delta_{(k_{l+1}^{i})^{-}})\}.
\end{align*}
This verifies Condition (3) in this case.

\textbf{Case 3.} Suppose $i\geq r_{l+1}\geq r_l$. Then $(k_l^i)^{-}= k_{l}^{i+1}$ and $(k_{l+1}^i)^-= k_{l+1}^{i+1}$. Corollary \ref{cor char of Delta}(a)(3) for $\Delta_{k_l^{i+1}}$ and \eqref{eq of K^-} give
\begin{align*}
    b(\Delta_{(k_{l}^i)^-})&=b(\Delta_{k_{l}^{i+1}})\\
    &= \max\{ b(\Delta_j)\ | \ j \in J\setminus (\sqcup_{0 \leq r <i+1}K^r),\ e(\Delta_j)=l,\ b(\Delta_{j})< b(\Delta_{k_{l+1}^{i+1}})\}\\
    &= \max\huge{(}\{ b(\Delta_j)\ | \ j \in J^-\setminus ( \sqcup_{0 \leq r <i+1}(K^r)^-),\ e(\Delta_j)=l,\ b(\Delta_{j})< b(\Delta_{k_{l+1}^{i+1}})\} \\
    &\ \ \ \ \ \ \ \ \ \ \ \ \ \ \sqcup\{ b(\Delta_{k_l^{i+1}})\}\huge{)}\\
     &= \max\{ b(\Delta_j)\ | \ j \in J^-\setminus (\sqcup_{0 \leq r <i}(K^r)^-),\ e(\Delta_j)=l,\ b(\Delta_{j})< b(\Delta_{(k_{l+1}^i)^{-}})\}.
\end{align*}
This verifies Condition (3) in this case.

Finally, we verify Condition (4). We separate into three cases: $i <r_{e-s} $, $r_{l+1}> i \geq r_{l}$ for some $ e-s\leq l < e$, and $e\geq r_{e}$.

\textbf{Case 1.} Suppose $i <r_{e-s} $. Then $(m^i)^{-}=m^i$ and $ (k^{i}_{(m^i)^-})^-= k_{m^i}^{i}$. Therefore, 
\begin{align*}
    &\{ j \in J^{-}\setminus (\sqcup_{0 \leq r <i} (K^{r})^{-})\ |\ e(\Delta_j)=(m^{i})^- -1, b(\Delta_{j})< b(\Delta_{(k_{(m^{i})^-}^i)^-})\\
    =&\{ j \in J^{-}\setminus (\sqcup_{0 \leq r <i} K^{r})\ |\ e(\Delta_j)=m^{i} -1, b(\Delta_{j})< b(\Delta_{k_{m^{i}}^i})\}\\
    \subseteq &\{ j \in J\setminus (\sqcup_{0 \leq r <i} K^{r})\ |\ e(\Delta_j)=m^{i} -1, b(\Delta_{j})< b(\Delta_{k_{m^{i}}^i})\}.
\end{align*}
Since the last set is empty by Corollary \ref{cor char of Delta}(a)(4) for $K^i(\beta)$, this verifies Condition (4) in this case.

\textbf{Case 2.} Suppose $r_{l+1}> i \geq r_{l}$ for some $ e-s \leq l < e$. Then $(m^i)^{-}=m^{i+1}$ and $ (k^{i}_{(m^i)^-})^-= k_{m^{i+1}}^{i+1}$. Observe that $\sqcup_{0\leq r<i} (K^r)^{-} \subseteq \sqcup_{0\leq r <i+1} K^r$. Moreover, the difference set can be written down explicitly as follows
\[ ( \sqcup_{0\leq r <i+1} K^r) \setminus (\sqcup_{0\leq r<i} (K^r)^{-})= \{ k_{e-s}^{r_{e-s}}, \ldots, k_{l}^{r_{l}}  \} \sqcup \{ k_{l+1}^i,\ldots, k_{e}^i \}. \]
As a consequence, if $j \in J^{-} \setminus (\sqcup_{0\leq r<i} (K^r)^{-})$ but $j \not\in J \setminus ( \sqcup_{0\leq r <i+1} K^r) $, then $j \in \{ k_{l+1}^i,\ldots, k_{e}^i \}$. In particular, since $i+1 \leq r_{l+1}$, Lemma \ref{lem M-W index}(a) implies
\[e(\Delta_j) \geq l+1  \geq m^{r_{l+1}} \geq m^{i+1}> m^{i+1}-1.\]
Therefore,
\begin{align*}
    &\{ j \in J^{-}\setminus (\sqcup_{0 \leq r <i} (K^{r})^{-})\ |\ e(\Delta_j)=(m^{i})^- -1, b(\Delta_{j})< b(\Delta_{(k_{(m^{i})^-}^i)^-})\\
    =&\{ j \in J^{-}\setminus (\sqcup_{0 \leq r <i} (K^{r})^{-})\ |\ e(\Delta_j)=m^{i+1} -1, b(\Delta_{j})< b(\Delta_{k_{m^{i+1}}^{i+1}})\}\\
    = &\{ j \in J\setminus (\sqcup_{0 \leq r <i+1} K^{r})\ |\ e(\Delta_j)=m^{i+1} -1, b(\Delta_{j})< b(\Delta_{k_{m^{i+1}}^{i+1}})\},
\end{align*}
which is empty by Corollary \ref{cor char of Delta}(a)(4) for $K^{i+1}(\beta)$, this verifies Condition (4) in this case.

\textbf{Case 3.} Suppose $i\geq r_{e}$. Then $(m^i)^{-}=m^{i+1}$ and $ (k^{i}_{(m^i)^-})^-= k_{m^{i+1}}^{i+1}$. Similar to the previous case, we have 
\[ ( \sqcup_{0\leq r <i+1} K^r) \setminus (\sqcup_{0\leq r<i} (K^r)^{-})= \{ k_{e-s}^{r_{e-s}}, \ldots, k_{e}^{r_{e}}  \}, \]
and hence 
\[J^{-} \setminus (\sqcup_{0\leq r<i} (K^r)^{-})=J \setminus ( \sqcup_{0\leq r <i+1} K^r). \]
Thus similar argument in the previous case verifies Condition (4) in this case. This completes the verification of the claim and the proof of the lemma.
\end{proof}

Remark that we have identities
\[\delta_{d,a}= \delta_{ \half{a-d}, \half{a+d}, a},\ \delta_{a,d}= \widetilde{\delta}_{ \half{a-d}, \half{a+d}, a}.\]

As a corollary, this gives an alternate proof of the following fact.
\begin{lemma}
Consider local Arthur parameter of the form
\[ \psi= \bigoplus_{i\in I_{\rho}} \rho \otimes \Sym^{d_i} \otimes \Sym^{a_i},  \]
and let $\alpha= \delta_{\psi}$. Then $\widetilde{\alpha}= \delta_{\widehat{\psi}}$, where
\[ \widehat{\psi}= \bigoplus_{i\in I_{\rho}} \rho \otimes \Sym^{a_i} \otimes \Sym^{d_i}.  \]
\end{lemma}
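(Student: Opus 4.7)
The plan is to establish this as a direct consequence of Lemma \ref{lemma M-W} combined with the extremality choice from Proposition \ref{prop main}, using induction on $|I_\rho|$. Since both the operation $\alpha\mapsto\widetilde{\alpha}$ and the operation $\psi\mapsto\widehat{\psi}$ respect the decomposition by supercuspidal support, it suffices to treat one $\rho$ at a time (which is already the setup of the statement).

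First I would choose $(d,a)$ as in Proposition \ref{prop main}: take $a+d$ to be the maximum of $\{a_i+d_i\mid i\in I_\rho\}$, and $d$ to be the minimum of $\{d_i\mid i\in I_\rho,\ a_i+d_i=a+d\}$. Applying Lemma \ref{lem a-d} with $\beta=\alpha$ (and $i=0$), every segment $[x,y]\in\alpha$ satisfies $x\ge -(a+d)/2$ and $y\le (a+d)/2$. With $b=(a-d)/2$, $e=(a+d)/2$, $s=a$, one has $b-s=-(a+d)/2$, so these are exactly Assumptions~(i),~(ii) of Lemma \ref{lemma M-W}, and by definition $\alpha$ contains the copy $\delta_{d,a}=\delta_{(a-d)/2,(a+d)/2,a}$ indicated by the first identity above.

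Lemma \ref{lemma M-W} then yields
\[\widetilde{\alpha}=\widetilde{\alpha^-}+\widetilde{\delta}_{(a-d)/2,\,(a+d)/2,\,a}=\widetilde{\alpha^-}+\delta_{a,d},\]
where $\alpha^-$ is $\alpha$ with one copy of $\delta_{d,a}$ removed and where the second equality is the second identity recorded just before the statement. By Proposition \ref{prop main}(ii) we have $\alpha^-=\delta_{\psi^-}$ with $\psi^-=\psi-\rho\otimes\Sym^{d}\otimes\Sym^{a}$, whose index set is strictly smaller. The induction hypothesis gives $\widetilde{\alpha^-}=\delta_{\widehat{\psi^-}}$, and combined with $\delta_{a,d}=\delta_{\rho\otimes\Sym^{a}\otimes\Sym^{d}}$ this recovers $\delta_{\widehat{\psi}}$, completing the induction. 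The base case $|I_\rho|=0$ (where $\alpha=\emptyset$) is trivial.

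No genuinely new obstacle arises: all of the combinatorial work has already been carried out in Lemma \ref{lem a-d} (providing the global bounds on base/end values of segments in $\alpha$) and Lemma \ref{lemma M-W} (peeling off a ``staircase'' block under those bounds). The only mild subtlety is that the hypotheses of Proposition \ref{prop main} and Lemma \ref{lem a-d} are stated for an external $\beta$ with $\beta\ge\alpha$ and $\widetilde{\beta}\ge\widetilde{\alpha}$; one takes $\beta=\alpha$, under which both comparisons are trivial. Thus the present lemma is essentially the clean, closed-form packaging of those results on parameters of Arthur type, recovering in particular the statement that the M{\oe}glin--Waldspurger algorithm swaps the two $\SL_2$-factors at the level of Arthur parameters.
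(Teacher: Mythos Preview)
Your proposal is correct and follows essentially the same argument as the paper's own proof: induction on $|I_\rho|$, choice of $(d,a)$ as in Proposition~\ref{prop main}, verification of the hypotheses of Lemma~\ref{lemma M-W} via Lemma~\ref{lem a-d} with $\beta=\alpha$, and the resulting splitting $\widetilde{\alpha}=\widetilde{\alpha^-}+\delta_{a,d}$. The only cosmetic differences are that the paper takes $|I_\rho|=1$ as the base case and observes $\alpha^-=\delta_{\psi^-}$ directly from the definitions rather than citing Proposition~\ref{prop main}(ii).
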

\begin{proof}
We apply induction on $| I_{\rho}|$. Define $a+d, d$ as in Proposition \ref{prop main}. We apply Lemma \ref{lemma M-W} on $\alpha$, which contains a copy of $\delta_{d,a}= \delta_{ \half{a-d}, \half{a+d}, a}$. Note that the Assumptions are verified by Lemma \ref{lem a-d} with $\beta= \alpha$ and $i=0$. We obtain
\begin{align*}
    \widetilde{\alpha}&= \widetilde{\alpha^{-}}+ \widetilde{\delta}_{ \half{a-d}, \half{a+d}, a} \\
    &=\widetilde{\alpha^{-}}+ \delta_{a,d},
\end{align*}
where $\alpha^{-}=\alpha- \delta_{d,a}=\delta_{\psi^{-}}$ and
\[ \psi^{-}= \psi - \rho\otimes \Sym^{d}\otimes \Sym^{a}. \]

If $|I_{\rho}|=1$, then $\alpha^{-}$ is empty, and hence $\widetilde{\alpha}= \delta_{a,d}= \delta_{\widehat{\psi}}$. If $| I_{\rho}| >1$, then the induction hypothesis shows that $\widetilde{\alpha^{-}}=\delta_{\widehat{\psi^{-}}}$, which implies that $\widetilde{\alpha}= \delta_{\widehat{\psi}}$. This completes the proof of the lemma.
\end{proof}

Now we prove Proposition \ref{prop main}.
\begin{proof}
We use the notation $e=e(\beta), t=t(\beta), K^i=K^i(\beta)$ defined in Section \ref{sec lemma MW}.

By Lemma \ref{lem a-d}(b), $e= \half{a+d}$. Let 
\[r:= \min \{ 0 \leq i \leq t-1 \ | \ M(\beta^i)\supseteq [(-a+d)/2, (a+d)/2] \}.\]
Note that the right hand side is non-empty since 
\[\widetilde{\beta} \geq \widetilde{\alpha}= \delta_{\widehat{\psi}} \supseteq \delta_{a,d} \ni [(-a+d)/2, (a+d)/2].\] 
Recall our notation that $M(\beta^r)=[m^r, e]$ and $K^r=\{ k_{e}^r,\ldots, k^{r}_{m^r}\}$. Corollary \ref{cor char of Delta}(a)(3) implies 
\[ b(\Delta_{k_{m^r}^r})<b(\Delta_{k_{m^r+1}^r})< \cdots < b(\Delta_{k_{e}^r}), \]
and hence 
\begin{align}\label{eq 1}
    b(\Delta_{k_{m^r}^r}) \leq  b(\Delta_{k_e^r})- (e-m^r) \leq b(\Delta_{k_e^r})- \half{a+d}+ \half{-a+d}= b(\Delta_{k_e^r})-a,
\end{align}
where we use the condition $m^{r} \leq (-a+d)/2$ given by the containment $[m^r, e] \supseteq [(-a+d)/2,e]$. On the other hand, Lemma \ref{lem a-d}(b) gives
\begin{align}\label{eq 2}
    b(\Delta_{k_e^r}) \leq (a-d)/2.
\end{align} 
Therefore,
\[ b(\Delta_{k_{m^r}^r}) \leq \half{a-d} -a= \half{-a-d}. \]
By Lemma \ref{lem a-d}(a), the equality must hold, and hence all the inequalities in \eqref{eq 1} and \eqref{eq 2} are equalities. In particular, for $m^r \leq l \leq e$, we have $\Delta_{k_{l}^r}=[ l-d, l],$ We conclude that
\[ \beta= \{\Delta_{j}\}_{j\in J} \supseteq \{\Delta_{k^r}\}_{k^r\in K^r}=\{[l-d,l]\}_{\half{-a-d}\leq l \leq \half{a+d}} = \delta_{d,a}.\]
This proves Part (i).

For Part (ii), it is clear that $\alpha^{-}=\delta_{\psi^{-}}$ and $\beta^{-} \geq \alpha^{-}$. It remains to show that $\widetilde{\beta^{-}} \geq \widetilde{\alpha^{-}}$. We apply Lemma \ref{lemma M-W} on $\beta$, which contains a copy of $\delta_{d,a}= \delta_{\half{a-d},\half{a+d},a}$. Note that the Assumptions in Lemma \ref{lemma M-W} are verified by Lemma \ref{lem a-d} with $i=0$. We obtain
\[  \widetilde{\alpha}=\widetilde{\alpha^{-}}+\delta_{a,d},\   \widetilde{\beta}=\widetilde{\beta^{-}}+\delta_{a,d}, \]
which implies $\widetilde{\beta^{-}}\geq \widetilde{\alpha^{-}}$ since $ \widetilde{\beta} \geq \widetilde{\alpha}$. This completes the proof of Part (ii) and the proposition.
\end{proof}

\subsection{Proof of Lemma \ref{lem main intro}}\label{sec finish}

It is equivalent to prove Lemma \ref{lem main segment}. Write $\alpha=\delta_{\psi}$ where
\[ \psi= \bigoplus_{\rho \in \mathcal{C}_{\textrm{unit}}} \bigoplus_{i\in I_{\rho}} \rho \otimes \Sym^{d_i} \otimes \Sym^{a_i}.\]
Let 
\[ \psi_{\rho}:= \bigoplus_{i\in I_{\rho}} \rho \otimes \Sym^{d_i} \otimes \Sym^{a_i}.\]
We may decompose $\alpha= \sum_{\rho \in \mathcal{C}_{\textrm{unit}}} \alpha_{\rho}$, where $\alpha_{\rho}=\delta_{\psi_{\rho}}$, and $\beta$ decomposes similarly as $\beta = \sum_{\rho \in \mathcal{C}_{\textrm{unit}}} \beta_{\rho}$. The pair of inequalities $\beta \geq \alpha $ and $\widetilde{\beta} \geq \widetilde{\alpha} $ is equivalent to the pairs of inequalities $\beta_{\rho} \geq \alpha_{\rho}$, $\widetilde{\beta_{\rho}} \geq \widetilde{\alpha_{\rho}}$ for every $\rho$,. Therefore, we may assume 
\[ \psi=\psi_{\rho}= \bigoplus_{i\in I_{\rho}} \rho \otimes \Sym^{d_i} \otimes \Sym^{a_i}\]
for some $\rho \in \mathcal{C}_{\text{unit}}$, and adopt the notation in Proposition \ref{prop main}.

Apply induction on $k:=| I_{\rho}|$. When $k=1$, Proposition \ref{prop main}(i) implies $\alpha= \delta_{d,a}=\beta$. Suppose $k>1$, we construct $\alpha^{-},\beta^{-}$ as in Proposition \ref{prop main}(ii). Then the induction hypothesis implies $\alpha^{-}=\beta^{-}$, and hence
\[  \alpha= \alpha^{-}+ \delta_{d,a}= \beta^{-}+ \delta_{d,a}= \beta.\]
This completes the proof of the lemma.

\end{document}